\newcolumntype{Y}{>{\centering\arraybackslash}X}
\newtheorem{theorem}{Theorem}[section]
\newtheorem{corollary}[theorem]{Corollary}
\newtheorem{lemma}[theorem]{Lemma}
\theoremstyle{definition}
\newtheorem{question}[theorem]{Question}
\newtheorem{example}[theorem]{Example}
\newtheorem{problem}[theorem]{Problem}
\def\Aut{\operatorname{Aut}}
\title{Distinguishing numbers of finite $4$-valent vertex-transitive graphs}
\author{Florian Lehner\thanks{Supported by the Austrian Science Fund (FWF), grant J 3850-N32}\, and Gabriel Verret}
\begin{document}

\maketitle

\begin{abstract}
The distinguishing number of a graph $G$ is the smallest $k$ such that $G$ admits a $k$-colouring for which the only colour-preserving automorphism of $G$ is the identity. We determine the distinguishing number of finite $4$-valent vertex-transitive graphs. We show that, apart from one infinite family and finitely many examples, they all have distinguishing number $2$.
\end{abstract}

\section{Introduction}
All graphs in this paper will be finite. A \emph{distinguishing colouring} of a graph is a colouring which is not preserved by any non-identity automorphism. The \emph{distinguishing number} $D(G)$ of a graph $G$ is the least number of colours needed for a distinguishing colouring of the vertices of $G$. These concepts were first introduced by Albertson and Collins \cite{albertsoncollins} and have since received considerable attention.

It is an easy observation that a graph has distinguishing number $1$ if and only if its automorphism group is trivial. Hence, by \cite{erdosrenyi} almost all graphs have distinguishing number $1$. This obviously is not true for vertex-transitive graphs which always have non-trivial automorphisms. However, it seems that the vast majority of vertex-transitive graphs still have the lowest possible distinguishing number $2$. Hence let us call a vertex-transitive graph \emph{exceptional} if its distinguishing number is not equal to $2$.

One of the most interesting results concerning distinguishing numbers of vertex-transitive graphs is that, apart from the complete and edgeless graphs, there are only finitely many exceptional vertex-primitive graphs~\cite{Cameron,seress}. It is only natural to ask whether something similar holds for vertex-transitive graphs as well. As a first step, H\"uning et al.\ recently determined the exceptional $3$-valent vertex-transitive graphs and their distinguishing numbers.

\begin{theorem}\cite[Corollary~2.2]{HuningEtAl}\label{theo:cubic}
The exceptional connected $3$-valent vertex-transitive graphs are 
\begin{enumerate}
\item $K_4$ and $K_{3,3}$, with distinguishing number $4$, and
\item $Q_3\cong K_4\times K_2$ and the Petersen graph, with distinguishing number $3$.
\end{enumerate}
\end{theorem}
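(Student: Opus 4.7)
The plan is to handle the four exceptional graphs directly, and then to show that every other connected cubic vertex-transitive graph has distinguishing number~$2$.

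For the four exceptions, the argument is a short finite check. The automorphism groups of $K_4$ and $K_{3,3}$ are $S_4$ and $S_3\wr S_2$ respectively, and the high degree of symmetry immediately forces $D(K_4)=D(K_{3,3})=4$: any two vertices of $K_4$, or any two vertices in the same part of $K_{3,3}$, can be swapped, so no two may share a colour. For $Q_3$ and the Petersen graph one exhibits an explicit distinguishing $3$-colouring and rules out distinguishing $2$-colourings by a case analysis on the sizes of the two colour classes, exploiting vertex-transitivity and the abundance of automorphisms stabilising any small vertex subset.

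For a non-exceptional connected cubic vertex-transitive graph $G$ on $n$ vertices, the strategy is to combine a bound on $|\Aut(G)|$ with a motion argument. By Tutte's classical theorem, a connected cubic arc-transitive graph is $s$-arc-regular for some $s\le 5$, so the vertex stabiliser has order at most~$48$ and $|\Aut(G)|\le 48n$; the same bound holds for non-arc-transitive cubic vertex-transitive graphs since the vertex stabiliser is then a proper subgroup of~$S_3$. By the motion lemma of Russell and Sundaram, $D(G)\le 2$ whenever $2^{m(G)/2} > |\Aut(G)|$, where $m(G)$ denotes the minimum number of vertices moved by any non-identity automorphism. It therefore suffices to prove $m(G) > 2\log_2(48n)$.

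The main obstacle is this lower bound on~$m(G)$. The natural route is a local rigidity argument: by Tutte's analysis, the pointwise stabiliser of a ball of some bounded radius around any vertex is trivial, so a non-identity automorphism must move a vertex within bounded distance of each of its fixed points. Combined with vertex-transitivity and a propagation argument along shortest paths, this should yield $m(G)\ge cn$ for some absolute constant $c>0$ for all but a short list of small graphs, which easily beats the required logarithmic bound. The finitely many small cases remaining after this asymptotic step can then be dispatched by inspection, for instance using the available census of small cubic vertex-transitive graphs.
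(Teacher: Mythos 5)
First, note that the paper does not prove this statement at all: it is imported verbatim from H\"uning et al.\ as \cite[Corollary~2.2]{HuningEtAl}, so there is no internal proof to compare against. Judged on its own merits, your proposal has a genuine gap in the asymptotic step.

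The finite checks for $K_4$, $K_{3,3}$, $Q_3$ and the Petersen graph are fine. The problem is the claim that $|\Aut G|\leq 48n$ for every connected cubic vertex-transitive $G$, and the companion claim that the motion satisfies $m(G)\geq cn$. For the first: Tutte's bound $|A_v|\leq 48$ really does require arc-transitivity, and your fallback argument for the non-arc-transitive case confuses the \emph{local group} (the permutation group induced by $A_v$ on the three neighbours, which is indeed a subgroup of $S_3$) with the vertex stabiliser $A_v$ itself; the kernel of the local action can be enormous. Concretely, the split Praeger--Xu graphs give connected cubic vertex-transitive graphs on $n=4r$ vertices with $|\Aut G|\geq 2^{r}=2^{n/4}$: take vertices $(x,i,\varepsilon)$ with $x\in\{0,1\}$, $i\in C_r$, $\varepsilon\in\{+,-\}$, join $(x,i,-)$ to $(x,i,+)$ and join $(x,i,+)$ to both $(y,i+1,-)$. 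For the second claim: in this same family, swapping $(0,i_0,\pm)\leftrightarrow(1,i_0,\pm)$ for a single fixed $i_0$ is a non-identity automorphism moving only $4$ vertices, so $m(G)=4$ is bounded while $n$ is unbounded. Thus for these graphs the hypothesis $2^{m(G)/2}>|\Aut G|$ of the Russell--Sundaram motion lemma fails catastrophically, and no ``propagation along shortest paths'' argument can rescue a linear lower bound on motion, because the bound is simply false. Since these graphs are not on the exception list, they do have distinguishing number $2$, but establishing that requires an explicit colouring construction (which is essentially what H\"uning et al.\ do); your strategy cannot reach them.
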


This result shows that there are only finitely many connected $3$-valent vertex-transitive exceptional graphs. This is not true for $4$-valent graphs, as shown by the following family of graphs. For $n\geq 3$, the \emph{wreath graph $W_n$} is the lexicographic product $C_n[2K_1]$ of a cycle of length $n$ with an edgeless graph of order $2$, see Figure \ref{fig:wreath}.

\begin{figure}[hh]
\centering
\begin{tikzpicture}
	\pgfdeclarelayer{E}
	\pgfdeclarelayer{V}
	\pgfsetlayers{E,V}
	\tikzmath{\i=.9; \o=1.5; \n=10; \s=360/\n;}
	\foreach \a in {1,...,\n}
	{
    \begin{pgfonlayer}{V}
		\node[inner sep=1.5pt,circle,draw,fill] (v\a) at ({\a*\s}:\o){};
		\node[inner sep=1.5pt,circle,draw,fill] (w\a) at ({\a*\s}:\i){};
    \end{pgfonlayer}
    
    \begin{pgfonlayer}{E}
        \path[draw] ({\a*\s}:\o)--({\s+\a*\s}:\o);
        \path[draw] ({\a*\s}:\o)--({\s+\a*\s}:\i);
        \path[draw] ({\a*\s}:\i)--({\s+\a*\s}:\o);
        \path[draw] ({\a*\s}:\i)--({\s+\a*\s}:\i);
    \end{pgfonlayer}
	}
\end{tikzpicture}
\caption{The wreath graph $W_{10}$}
\label{fig:wreath}
\end{figure}
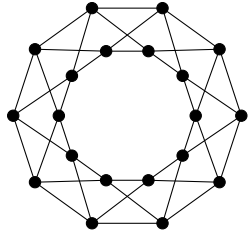

It is easy to see that wreath graphs  form an infinite family of connected exceptional $4$-valent vertex-transitive graphs, thus providing a negative answer to~\cite[Question~2]{HuningEtAl}. Our main result shows that this is the only such family, that is, apart from the wreath graphs, there are only finitely many connected exceptional $4$-valent vertex-transitive graphs.

\begin{theorem}\label{thm:main}
The exceptional connected  $4$-valent vertex-transitive graphs are  
\begin{enumerate}
\item $K_5$ and $K_{4,4} \cong W_4$, with distinguishing number $5$, and 
\item $K_3 \square K_3$, $K_4 \square K_2$, $K_5\times K_2$ and $W_n$ for some $n\geq 3$, $n\neq 4$, with distinguishing number $3$.
\end{enumerate}
\end{theorem}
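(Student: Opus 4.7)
The theorem has two directions. For the direct direction I check that each listed graph is genuinely exceptional. The small graphs $K_5$, $K_{4,4}$, $K_3\square K_3$, $K_4\square K_2$, $K_5\times K_2$ are handled by direct computation of distinguishing colourings. The crux is the wreath family: $\Aut(W_n)$ contains the group $(S_2)^n$ swapping the two vertices in each of the $n$ natural blocks of size $2$, so any $2$-colouring of $V(W_n)$ leaves at least one block monochromatic, and the transposition in that block is a non-trivial colour-preserving automorphism; hence $D(W_n)\geq 3$, and a matching explicit $3$-colouring establishes equality.

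For the converse I would apply the Russell--Sundaram motion lemma: if the minimum number $m(G)$ of vertices moved by a non-identity automorphism exceeds $2\log_2|\Aut(G)|$, then $D(G)\leq 2$. This requires structural control of $\Aut(G)$, so I would split into cases according to the action of $\Aut(G)$ on edges. If $\Aut(G)$ is not edge-transitive, the edge orbits decompose $G$ into lower-valency vertex-transitive spanning subgraphs; the $2$-valent factors are disjoint unions of cycles, and any $3$-valent factor is constrained by Theorem \ref{theo:cubic}. If $\Aut(G)$ is edge-transitive (arc-transitive or half-arc-transitive), known results on $4$-valent graphs bound the vertex stabiliser either absolutely or outside an explicit short list of lexicographic-type constructions containing the wreath graphs.

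In every bounded-stabiliser subcase $|\Aut(G)|$ is at most polynomial in $|V(G)|$, while a generic estimate gives motion that is a positive fraction of $|V(G)|$, so the motion lemma applies once $|V(G)|$ is large enough and leaves only finitely many small graphs to check individually. The main obstacle is the unbounded-stabiliser subcase, where the motion lemma cannot be invoked. There the structural classification forces the graph to be wreath-like, and one must argue graph-by-graph: either show the graph is isomorphic to some $W_n$ (already on our list) or construct an explicit $2$-colouring that simultaneously breaks every block-swap and every residual automorphism on the quotient. This case-by-case verification of near-wreath constructions is the technical heart of the proof.
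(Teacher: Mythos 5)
Your plan diverges from the paper's actual strategy (which never invokes the motion lemma and instead builds explicit $2$-colourings case by case), and as it stands it has genuine gaps in both directions. Even in the easy direction, your argument that $D(W_n)\geq 3$ is incorrect as stated: a $2$-colouring of $W_n$ need not leave any block of size $2$ monochromatic (colour one vertex of each block black and the other white). The bound is still true, but in the all-bichromatic case the colour-preserving automorphism is not a block transposition; rather, any nontrivial element of the induced $D_n$ on the cycle of blocks can be composed with swaps inside blocks to become colour-preserving. This is easily repaired, but the argument you wrote does not prove the claim.

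The converse direction is where the real problems lie. First, the motion lemma only helps if you can show that the motion of a $4$-valent vertex-transitive graph with bounded vertex-stabiliser is large compared to $\log_2|\Aut(G)|$; you assert that "a generic estimate gives motion that is a positive fraction of $|V(G)|$", but no such estimate is generic --- lower bounds on motion (equivalently, upper bounds on fixity) for $4$-valent vertex-transitive graphs are themselves substantial results and would need proof. Second, in the arc-transitive case the vertex-stabiliser is genuinely unbounded well outside the wreath family: the Praeger--Xu graphs $C(r,s)$ (and, in the half-arc-transitive case, Maru\v si\v c's examples) have arbitrarily large stabilisers and are not wreath graphs, so your claim that "the structural classification forces the graph to be wreath-like", together with the ensuing "case-by-case verification" that you yourself identify as the technical heart, is precisely the content that is missing. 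The paper avoids all of this by a different route: it classifies edges by the orbit structure of the local group (type $1$, $2$, or $4$), reduces the type-$1$ case to the cubic classification, handles the half-arc-transitive case via $s$-arc-regularity of an orientation, and in the arc-transitive case works through girth $3$ (line graphs), girth at least $5$ (a greedy colouring along a cycle whose removal leaves a $2$-edge-connected graph), and girth $4$ locally $D_4$ (straight versus crooked $2$-arcs plus an auxiliary-graph induction that absorbs exactly the large-stabiliser examples). Your outline would need comparable structural work in each branch before it could be called a proof.
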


In particular, there is no example with distinguishing number $4$. This leads us to the following question.

\begin{question}
For $\Delta\geq 5$, is there a connected $\Delta$-valent vertex-transitive  graph $G$ with $D(G)=\Delta$? 
\end{question}

More generally, one could ask about ``gaps'' in the set of distinguishing numbers of connected $\Delta$-valent vertex-transitive graphs, as a subset of $\{2,\ldots,\Delta+1\}$.

Using lexicographic products, it is not hard to construct infinite families of connected exceptional vertex-transitive graphs with fixed valency.
\begin{example}
 Let $H_1$ be a connected vertex-transitive graph of valency $\Delta_1$ and let $H_2$ be a vertex-transitive graph of valency $\Delta_2$ on $n_2$ vertices. Then the lexicographic product $H_1[H_2]$ is connected, has valency $\Delta_1 n_2 + \Delta_2$ and its distinguishing number is at least $D(H_2)+1$. For an infinite family of examples that are not lexicographic products, note that, for every $n\geq 3$ and every $d\geq 2$, the graph $(C_n[d^2K_1])\square K_2$ has valency $2d^2+1$ and distinguishing number strictly greater than $d$. 
\end{example}

We hence pose the following (informal) problem.
\begin{problem}
Is there a  ``natural small family'' $\mathcal{F}$ of exceptional graphs such that, for every positive integer $k$, all but finitely many $k$-valent connected exceptional vertex-transitive graphs are contained in $\mathcal{F}$?
\end{problem}

\section{Definitions and auxiliary results}

Throughout this paper, all graphs are assumed to be finite and simple. Graph theoretic notions that are not explicitly defined will be taken from \cite{diestelbook}.

An \emph{automorphism} of a graph is an adjacency preserving permutation of its vertices. The group of all automorphisms of  a graph $G$ is denoted by $\Aut G$. We say that a graph is \emph{vertex-transitive} if its automorphism group is transitive (that is, for every pair of vertices, there exists an automorphism mapping the first to the second). 

An \emph{arc} in a graph $G$ is an ordered pair of adjacent vertices, or equivalently, a walk of length $2$ in $G$. An \emph{$s$-arc} is a non-backtracking walk of length $s$ in $G$, i.e.\ a sequence of vertices $v_0, \dots, v_s$ where $v_i$ is adjacent to $v_{i+1}$ for $0 \leq i \leq s-1$, and $v_{i-1} \neq v_{i+1}$ for $1 \leq i \leq s-1$. The automorphism group $\Aut G$ acts on the set of edges, arcs, and $s$-arcs of $G$ in an obvious way. Call a graph \emph{edge-transitive}, \emph{arc-transitive}, or \emph{$s$-arc-transitive}, if the action of $\Aut G$ on edges, arcs, or $s$-arcs is transitive, respectively. Analogously define  \emph{arc-regular} and \emph{$s$-arc-regular}. 

The \emph{local group} at a vertex $v$ is the permutation group induced by the stabiliser of $v$ acting on its neighbourhood $N(v)$. Note that, for vertex-transitive graphs, this does not depend on the choice of $v$ (up to permutation equivalence). We say that a graph is locally $\Gamma$, if the local group is isomorphic to $\Gamma$. 

A graph $G$ is called \emph{$k$-connected} if it remains connected after removing any set of at most $k-1$ vertices and all incident edges, and \emph{$k$-edge connected} if it remains connected after removing any set of $k$ edges. The following result about the connectivity of vertex-transitive graphs is due to Watkins~\cite{connectivity}.

\begin{lemma}
\label{lem:connectivity}
A vertex-transitive graph with valency $r$ is at least $\frac{2r}3$-connected.
\end{lemma}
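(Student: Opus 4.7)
The plan is to argue by contradiction, using the minimum-fragment strategy originating with Watkins. Suppose $\kappa(G)<2r/3$ and fix a minimum vertex cut $S$. Let $A$ be a smallest connected component of $G-S$ and set $B=V(G)\setminus(A\cup S)$, which is non-empty. First I would verify that every vertex of $S$ has a neighbour in both $A$ and $B$; otherwise one could delete it from $S$ and obtain a strictly smaller cut.

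Next, I would collect elementary edge-counting inequalities. Since every edge leaving $A$ enters $S$, we have $\sum_{v\in A}|N(v)\cap S|\leq r|A|$, and since each $s\in S$ has a neighbour in $A$, $\sum_{v\in A}|N(v)\cap S|\geq |S|$. Analogous inequalities hold with $A$ replaced by $B$. These alone are not enough; they will feed into the main counting later.

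The crucial input is vertex-transitivity. Since $A$ and $B$ are both non-empty, pick $g\in\Aut G$ mapping some vertex of $A$ into $B$, and set $S'=g(S)$, $A'=g(A)$, $B'=g(B)$. Then $S'$ is another minimum vertex cut of the same size $\kappa$. Partition $V(G)$ into the nine cells $X\cap Y$ for $X\in\{A,S,B\}$, $Y\in\{A',S',B'\}$. The key point is that appropriate unions of these cells are components of $G-(S\cup S')$, so they are separated from the rest of the graph by subsets of $S\cup S'$; minimality of $|S|=|S'|=\kappa$ then forces sharp size inequalities between these cells. Combined with the minimality of $|A|$ and $|A'|$ as fragments, a case analysis of which corner cells can be non-empty, together with the edge counts collected above, yields the desired bound $\kappa\geq 2r/3$, contradicting our assumption.

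The main obstacle is the case analysis in the last step: one has to handle, or rule out, the various ways two minimum cuts $S$ and $S'$ can interlock, in particular the delicate configuration where the diagonally opposite cells $A\cap B'$ and $B\cap A'$ are simultaneously non-empty, as the fragment-intersection argument behaves differently there. Since the statement is invoked only as a black box in the remainder of the paper, in practice it is reasonable to simply cite Watkins' original proof rather than redevelop the full combinatorial analysis here.
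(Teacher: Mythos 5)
The paper offers no proof of this lemma at all---it is stated as a known theorem of Watkins and supported only by the citation \cite{connectivity}---so your concluding recommendation to simply cite Watkins coincides exactly with what the authors do. Your preceding sketch correctly identifies the standard fragment/atom strategy behind Watkins' bound (minimum cut, smallest fragment, a translated cut, and the $3\times 3$ intersection grid), but be aware that as a standalone proof it is incomplete, since the entire content of the theorem lives in the case analysis you defer.
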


If we impose additional properties on the set of vertices to be removed, then we can remove much larger sets without disconnecting the graph. The following lemma follows easily from results in \cite{cyclicconnectivity}.

\begin{lemma}
\label{lem:cyclicconnectivity}
If $G$ is a $k$-valent vertex-transitive graph with $k \geq 4$ and girth $g \geq 5$, then there is a $g$-cycle $C$ in $G$ such that $G-C$ is $2$-edge connected.
\end{lemma}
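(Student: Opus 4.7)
The plan is to establish the conclusion for \emph{any} $g$-cycle $C$, arguing by contradiction from the failure of 2-edge-connectivity of $G-C$ via the cyclic edge-connectivity estimate of~\cite{cyclicconnectivity}. Recall that the cyclic edge-connectivity $c\lambda(G)$ is the smallest size of an edge cut whose removal leaves at least two components each containing a cycle; the bound I would draw from~\cite{cyclicconnectivity} is $c\lambda(G)\geq g(k-2)$, which holds for a connected vertex-transitive graph of valency $k\geq 3$ and girth $g\geq 4$. Since $g\geq 5$ forces $C$ to be induced, exactly $g(k-2)$ edges join $V(C)$ to $V(G)\setminus V(C)$.

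Assume for contradiction that $V(G)\setminus V(C)=A\sqcup B$ with $A,B$ nonempty and $|E(A,B)|\leq 1$. By symmetry I may take $|E(A,V(C))|\leq \tfrac12 g(k-2)$. If $A$ contained a cycle, the cut $(A,V(G)\setminus A)$ would be cyclic (as $V(C)\subseteq V(G)\setminus A$) and have size at most $\tfrac12 g(k-2)+1<g(k-2)$ (using $k\geq 4$, $g\geq 5$), contradicting the cyclic-connectivity bound. An analogous application to the cut $(A\cup V(C),B)$ would force $|E(A,V(C))|\leq 1$ if $B$ contained a cycle, but the degree-count below for the acyclic set $A$ gives $|E(A,V(C))|\geq (k-2)|A|+1\geq k-1\geq 3$, a contradiction. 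Hence both $A$ and $B$ are forests.

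For the forest $A$, summing degrees yields $k|A|\leq 2(|A|-1)+|E(A,V(C))|+|E(A,B)|$, so $|E(A,V(C))|\geq (k-2)|A|+1$; the analogous bound holds for $B$. Summing and substituting $|E(A,V(C))|+|E(B,V(C))|=g(k-2)$ gives $|A|+|B|\leq g-2/(k-2)\leq g-1$ by integrality, hence $|V(G)|\leq 2g-1$. This contradicts the Moore bound $|V(G)|\geq 1+k+k(k-1)\geq 17$ for $k\geq 4$ and $g\geq 5$, since the Moore bound grows exponentially in $g$ whereas $2g-1$ grows linearly, so $M(k,g)>2g-1$ throughout the parameter range. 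The main obstacle I foresee is pinning down the precise form of the bound $c\lambda(G)\geq g(k-2)$ invoked from~\cite{cyclicconnectivity} and verifying that no exceptional-case analysis is required in our valency and girth range; the degree-counting and Moore-bound comparison that follow are then routine.
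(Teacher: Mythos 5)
Your argument is correct, and it actually proves a slightly stronger statement ($G-C$ is $2$-edge connected for \emph{every} $g$-cycle $C$) by a genuinely different route. The paper's proof first uses Theorem~4.5 of \cite{cyclicconnectivity} to select one particular $g$-cycle $C$ whose coboundary is a \emph{minimum} cycle-separating cut, then runs a local exchange argument: a bridge of $G-C$ splits it into two pieces, each of which contains a cycle by the minimum-degree lemma of the same reference, and one of the two cuts separating a piece from the rest would be strictly smaller than the chosen minimum cut. You instead use only the numerical consequence $c\lambda(G)\geq g(k-2)$ --- which in the range $k\geq 4$, $g\geq 5$ is exactly what that Theorem~4.5 delivers, since the coboundary of an induced $g$-cycle has $g(k-2)$ edges --- and then argue globally: both sides of a hypothetical bad partition are forced to be forests, the degree count gives $|E(A,V(C))|\geq (k-2)|A|+1$ and likewise for $B$, hence $|V(G)|\leq 2g-1$, contradicting the Moore bound. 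Your approach buys uniformity over all $g$-cycles and avoids the auxiliary minimum-degree lemma, at the cost of invoking the Moore bound and of having to pin down the cyclic-optimality inequality; the paper's approach needs the sharper ``minimum attained at a $g$-cycle'' form of the theorem but is otherwise purely local. One caution: your parenthetical claim that $c\lambda(G)\geq g(k-2)$ already holds for $k\geq 3$ and $g\geq 4$ is more than you need and should not be asserted without checking the reference; for the lemma's range $k\geq 4$, $g\geq 5$ the input is available, and all the remaining steps (the choice of $A$ with the smaller coboundary, the treatment of $B$ via the cut separating $A\cup V(C)$ from $B$, and the integrality step $|A|+|B|\leq g-1$) check out.
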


\begin{proof}
By \cite[Theorem 4.5]{cyclicconnectivity}, there is a $g$-cycle $C$ such that the edges with one endpoint in $C$ and the other endpoint in $H:= G - C$ form a minimum (w.r.t.\ cardinality) cut separating two cycles in $G$. Assume that $H$ was not $2$-edge connected and let $e$ be a cut-edge of $H$. Let $A$ and $B$ be the two components of $H - e$. By \cite[Lemma 3.3]{cyclicconnectivity}, the minimum degree of $H$ is $2$, so $A$ and $B$ each contain at most one vertex of degree $1$, and thus there are cycles in both components. Now either the cut separating $A \cup C$ from $B$, or the cut separating $B \cup C$ from $A$ contains strictly fewer edges than the cut separating $C$ from $H$, contradicting the minimality. 
\end{proof}

We will also need the notion of \emph{distinguishing index} $D'(G)$ of a graph $G$, which is the least number of colours needed for a distinguishing colouring of the edges of $G$. Here are a few  results giving upper bounds on $D'(G)$. The first two are Theorems~2.8 and 3.2 in~\cite{Monika}.

\begin{theorem}
\label{monikabound}
Let $G$ be a connected graph that is neither a symmetric nor a bisymmetric tree. If the maximum degree $\Delta(G)$ of $G$ is at least 3, then $D'(G)\leq \Delta(G) - 1$ unless G is $K_4$ or $K_{3,3}$.
\end{theorem}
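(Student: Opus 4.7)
The plan is to split the argument by whether $G$ is a tree, and deal with the exceptional small graphs $K_4$ and $K_{3,3}$ separately. For those two, a direct enumeration of the finitely many $2$-edge colourings (up to isomorphism) shows that none is distinguishing, so $D'(K_4) = D'(K_{3,3}) = 3$.

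Suppose first $G$ is a tree $T$ that is neither symmetric nor bisymmetric. I would root $T$ at its centre (a vertex or an edge, which is fixed by every automorphism of $T$) and construct a distinguishing edge colouring by induction from the leaves upward. At an internal vertex $v$ with children $c_1, \dots, c_d$, having already coloured the rooted subtrees hanging off each $c_i$, I choose colours for the parent-edges $vc_i$ so that the resulting coloured rooted subtrees at $v$ are pairwise non-isomorphic; this prevents any colour-preserving automorphism from permuting the $c_i$. If the child subtrees fall into isomorphism classes of multiplicities $n_1, \dots, n_k$, then $\max_i n_i$ colours suffice at $v$. For non-central $v$ we have $d \leq \Delta - 1$, so $\max_i n_i \leq \Delta - 1$ automatically. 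At the centre, $d$ may reach $\Delta$, but $\max_i n_i$ equals $\Delta$ only if all $\Delta$ centre-subtrees are isomorphic as coloured rooted trees --- which, after unpacking the induction, amounts exactly to $T$ being symmetric (or bisymmetric, in the edge-centre case).

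For a graph $G$ containing a cycle, I would choose a spanning tree $T$, run the tree procedure on $T$, and use cotree edges to correct any obstruction at the root of $T$. Each cotree edge $e$ together with $T$ forms a unique fundamental cycle, and assigning a colour to $e$ introduces an asymmetry between its endpoints that lets one distinguish otherwise-isomorphic subtrees at the root. Since $G$ contains a cycle, at least one cotree edge is available, recovering the $(\Delta - 1)$-colour bound even when the underlying tree $T$ would by itself require $\Delta$ colours.

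The main obstacle throughout is the tree case: one must show that in the non-symmetric, non-bisymmetric regime the inductive colouring can always be chosen to respect the $(\Delta-1)$-budget at the centre. A careful invariant is needed --- for instance, inducting on depth while maintaining the property that each coloured rooted subtree has trivial root-stabiliser among colour-preserving automorphisms --- together with a structural characterisation showing that non-symmetric/non-bisymmetric trees always admit such an invariant-preserving colouring. The cycle case then reduces cleanly to the tree case via a single cotree edge, and the small exceptions $K_4$ and $K_{3,3}$ fall out of a direct check.
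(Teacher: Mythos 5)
A preliminary remark: the paper does not prove this statement at all --- it is quoted as Theorems~2.8 and~3.2 of the cited reference of Pil\'sniak, so the only ``proof'' in the paper is a citation. Your sketch is an attempt to reprove that external result, and it has two genuine gaps.

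The first, and the one that carries essentially all the difficulty, is at the centre of the tree, and you have half-noticed it yourself. Your accounting at an internal vertex $v$ only varies the colours of the parent edges $vc_i$ while keeping the already-chosen colourings of the subtrees below the $c_i$ frozen; that is why you need $\max_i n_i$ colours, where the $n_i$ are multiplicities of isomorphism classes of \emph{coloured} rooted subtrees. With that accounting, your claim that the bad case at the centre (``all $\Delta$ centre-subtrees isomorphic as coloured rooted trees'') occurs exactly when $T$ is symmetric is false. Take the spider with three legs of length $2$: it is not a symmetric tree (its internal vertices do not all have the same degree), the three rooted subtrees at the centre are isomorphic, and if your induction colours them identically you are forced to use $3=\Delta$ colours at the centre --- yet this tree has $D'=2$, because one can give the three legs three \emph{different} rigid colourings. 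The actual content of the quoted theorem is a counting argument: one tracks the number of pairwise inequivalent distinguishing colourings of each rooted subtree and shows it is large enough that isomorphic siblings can be told apart without spending a fresh colour on the parent edge, the symmetric and bisymmetric trees being exactly where this count degenerates. Your sketch names this as ``the main obstacle'' but supplies no argument for it, so the tree case is not established.

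The second gap is the reduction of the non-tree case to the tree case. An automorphism of $G$ need not preserve your chosen spanning tree $T$ (it may send tree edges to cotree edges), so ``running the tree procedure on $T$'' does not by itself control $\Aut G$, and a single coloured cotree edge does not obviously kill the symmetries that remain. In the source, the non-tree case is a separate and more delicate argument (a BFS spanning tree rooted on a cycle, with explicit handling of the non-tree edges), not a one-line patch of the tree case. Finally, the paragraph on $K_4$ and $K_{3,3}$ is not needed for the statement as given, since those graphs are excluded by hypothesis.
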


\begin{theorem}
\label{traceable}
If $G$ is a graph of order at least $7$ with a Hamiltonian path, then $D'(G)\leq 2$.
\end{theorem}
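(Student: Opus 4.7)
The plan is to construct a distinguishing $2$-edge-colouring $c$ of $G$ by exploiting the Hamiltonian path $P = v_1 v_2 \cdots v_n$. The high-level strategy is to colour a small number of path edges near $v_1$ with colour $1$ so that the resulting colour-$1$ subgraph is a rigid ``anchor'' that forces a $c$-preserving automorphism $\phi$ to fix $v_1,\ldots,v_6$ pointwise, and then to propagate this fixing along $P$ to the rest of the graph.

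For the anchor I would assign colour $1$ to a chosen subset of the first few path edges so that the colour-$1$ subgraph has two pairwise non-isomorphic components — for instance the isolated edge $v_1v_2$ together with the length-$3$ path $v_3v_4v_5v_6$, which fits inside $P$ since $n \geq 7$ — and colour every remaining edge of $G$ (path or non-path) with $0$. Any $\phi$ preserving $c$ must preserve the colour-$1$ subgraph, and the non-isomorphism of its two components forces $\phi$ to stabilise each component setwise. The internal symmetry of the length-$3$ path component (its reversal $v_3 \leftrightarrow v_6$, $v_4 \leftrightarrow v_5$) is then broken by the \emph{colour-$0$} Hamiltonian-path edge $v_2v_3$ that bridges the two components, since $\phi(v_2)\in\{v_1,v_2\}$ must be $G$-adjacent to $\phi(v_3)$, while on a generic $G$ none of $v_1,v_2$ is adjacent to $v_6$. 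A similar adjacency argument distinguishes $v_1$ from $v_2$ using their respective links into the already-fixed component.

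Once $v_1, \ldots, v_6$ are fixed, I would propagate the fixing along $P$ inductively: if $v_1, \ldots, v_k$ are fixed then $\phi(v_{k+1}) \in N_G(v_k) \setminus \{v_1, \ldots, v_k\}$, and when this set fails to be a singleton because of extra non-path neighbours of $v_k$, one places an additional colour-$1$ edge further along $P$ to break the residual local symmetry before the next inductive step. The hypothesis $n \geq 7$ is used precisely to provide the room needed both for the initial anchor and for such adaptive perturbations along the rest of $P$.

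The main obstacle is that the non-path edges of $G$, being coloured $0$ and hence indistinguishable from unmarked path edges, may introduce ``ghost'' neighbours of already-fixed vertices that conspire with the path structure to produce residual symmetries; in particular, the neat anchor described above can fail when $G$ has pathological non-path edges such as $v_2v_6$ making the reversal $\phi$ a genuine automorphism. A fully rigorous proof therefore requires either a careful case analysis on these non-path edges or an adaptive, $G$-dependent choice of where the colour-$1$ edges are placed, rather than a single universal formula. Handling this case split efficiently is the technical heart of the argument.
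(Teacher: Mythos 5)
This statement is not proved in the paper at all: it is quoted as Theorem~3.2 of \cite{Monika}, so there is no internal argument to compare against, and your proposal has to stand on its own. It does not. The anchor step already fails on concrete inputs: take $G=K_7$ with any Hamiltonian path. Your red subgraph $\{v_1v_2\}\cup\{v_3v_4,v_4v_5,v_5v_6\}$ is preserved by the transposition $(v_1\,v_2)$ (and by the reversal of the $P_4$ component), and every adjacency test you invoke to kill these symmetries --- ``$\phi(v_2)$ must be adjacent to $\phi(v_3)$'', ``none of $v_1,v_2$ is adjacent to $v_6$'' --- is vacuous in a complete graph, where all pairs are adjacent. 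You concede this yourself (``the neat anchor can fail when $G$ has pathological non-path edges''), but that concession is exactly the point: the theorem is about arbitrary traceable graphs of order at least $7$, dense ones included, and the ``careful case analysis'' you defer to is the entire content of the theorem, not a technicality.

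The propagation step has the same character. From ``$v_1,\dots,v_k$ fixed'' you only get $\phi(v_{k+1})\in N_G(v_k)\setminus\{v_1,\dots,v_k\}$, which can be large, and the remedy of ``placing an additional colour-$1$ edge further along $P$'' is not a construction: you do not specify which edge, do not verify that the extra red edges avoid creating a second vertex with the colour-degree profile of $v_1$ or a second copy of your anchor components, and do not show that these local repairs can be made simultaneously and consistently along the whole path. What is missing is an argument whose rigidity is certified by invariants that do not depend on which chords $G$ happens to contain (for instance, colour degrees of vertices in a carefully chosen colouring of the path edges, with all non-path edges receiving the majority colour), rather than on generic non-adjacencies. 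As written, the proposal is a statement of intent with the hard case explicitly left open, so it cannot be accepted as a proof of the theorem.
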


\begin{lemma}\label{lem:linegraph}
If $G$ is a connected graph on $5$ or more vertices, then $\Aut L(G)$ is permutationally equivalent to $\Aut G$ with its natural action on $E(G)$. Furthermore, in this case $D'(G) \leq D(G)$, unless $G$ is a tree.
\end{lemma}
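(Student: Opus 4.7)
The first claim, that $\Aut L(G)$ is permutationally equivalent to the action of $\Aut G$ on $E(G)$, is Whitney's classical theorem on line graph automorphisms: for connected graphs on at least five vertices, every automorphism of $L(G)$ is induced by a unique automorphism of $G$, so the natural homomorphism $\Aut G \to \Aut L(G)$ sending $\sigma$ to the map $\{u,v\} \mapsto \{\sigma(u),\sigma(v)\}$ is a group isomorphism. By construction this isomorphism is a permutation equivalence between the two actions on $E(G) = V(L(G))$, and I would simply cite Whitney's theorem here.

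For the second claim, the first part identifies edge colourings of $G$ with vertex colourings of $L(G)$ and shows that they are preserved by the same symmetries, so $D'(G)=D(L(G))$ and it suffices to prove $D(L(G))\leq D(G)$. My plan is to start from a distinguishing vertex colouring $c\colon V(G)\to\{1,\dots,k\}$ with $k=D(G)$ and manufacture from it a distinguishing edge colouring $c'\colon E(G)\to\{1,\dots,k\}$. Since $G$ is not a tree it has a spanning unicyclic subgraph $H$; I would orient the unique cycle of $H$ cyclically and orient every other edge of $H$ towards the cycle, so that every vertex has exactly one outgoing edge $e_v\in E(H)$. Setting $c'(e_v)=c(v)$ for each $v$ and giving the remaining edges a fixed colour uses only $k$ colours, and if an automorphism of $G$ preserved both $c'$ and the auxiliary structure $(H,\text{orientation})$ it would have to preserve $c$, hence be the identity.

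The main obstacle is that an automorphism $\sigma$ preserving $c'$ need not preserve $H$ or the chosen orientation, since neither is canonical. I expect to close this gap either by making the auxiliary structure canonical (for instance choosing $H$ invariant under $\Aut G$ via an averaging argument or via the cycle space of $G$) or by a direct local argument showing that, for non-trees, the multiset of $c'$-colours around each vertex still determines $c(v)$ up to $\Aut G$-action. The tree hypothesis is essential here: for a star $K_{1,n}$ with $n\geq 4$ one has $D(K_{1,n})=2$ but $D'(K_{1,n})=n$, so on trees the inequality can fail badly and any such construction is bound to break down.
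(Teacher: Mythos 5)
Your handling of the first claim is the same as the paper's: it is disposed of by citing Whitney's theorem (the paper attributes the permutation-equivalence form to Jung), and your identification of $D'(G)$ with $D(L(G))$ is a correct and harmless reformulation. The second claim is where your proof has a genuine gap, and you acknowledge it yourself. The paper does not reprove this fact; it simply invokes an existing theorem of Lehner and Smith stating exactly that a distinguishing vertex colouring with $k$ colours of a connected graph that is not a tree can be converted into a distinguishing edge colouring with $k$ colours. You instead sketch the natural transfer construction (spanning unicyclic subgraph $H$, out-degree-one orientation, $c'(e_v)=c(v)$, a fixed colour on the leftover edges) and then write that you ``expect to close'' the gap that an automorphism preserving $c'$ need not preserve $H$ or its orientation. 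That gap is the whole theorem: from $\sigma$ preserving $c'$ you only know that $\sigma(e_v)$ is \emph{some} edge of colour $c(v)$ incident to $\sigma(v)$; it may be the outgoing edge of a neighbour of $\sigma(v)$, or one of the leftover edges if the fixed colour collides with a colour used by $c$, so you cannot conclude $c(\sigma(v))=c(v)$. Your fallback of making the pair $(H,\text{orientation})$ canonical is also not viable in general: an $\Aut G$-invariant spanning subgraph of a vertex-transitive graph is regular, hence cannot be unicyclic unless $G$ itself is. Closing this requires real work --- it is essentially the content of the cited Lehner--Smith result --- and the efficient route, which the paper takes, is to cite it.

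A small factual slip: $D(K_{1,n})=n$ for $n\geq 2$ (all leaves must receive distinct colours), not $2$, so your example does not exhibit $D'>D$ on a tree. A correct example is the double star with $m\geq 2$ leaves at each of the two adjacent centres, where $D=m$ but $D'=m+1$. The point that the tree hypothesis is necessary is nevertheless right.
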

\begin{proof}
 The first part is a variant of Whitney's theorem due to Jung \cite{junglinegraph}, the second part follows by applying \cite[Theorem 1.3]{lehnersmith} to a distinguishing colouring with $D(G)$ colours.
\end{proof}

In the remainder of this section, we discuss some known results on distinguishing numbers and determine the distinguishing numbers of several graphs that will occur in the proof of Theorem~\ref{thm:main}. The following lemma gives a general bound on distinguishing numbers and was independently proved in \cite{deltabound1} and \cite{deltabound2}.

\begin{lemma}
\label{degreebound}
 If $G$ is a connected graph with maximum degree $\Delta$, then $D(G) \leq \Delta + 1$, with equality if and only if $G$ is either $C_5$, or $K_n$ or $K_{n,n}$ for some $n \geq 1$.
\end{lemma}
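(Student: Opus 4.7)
The plan is to handle the two parts of the lemma — the upper bound $D(G)\le\Delta+1$ and the characterisation of equality — separately.

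For the upper bound, my approach would be a greedy construction based on breadth-first search. Fix a vertex $v_0$, root a BFS tree at $v_0$, colour $v_0$ with colour $1$, and give the at most $\Delta$ neighbours of $v_0$ pairwise distinct colours from $\{2,\ldots,\Delta+1\}$. Then process the remaining vertices in BFS order: for each vertex $u$, its BFS-parent $p$ has already been coloured and $u$ has at most $\Delta-1$ previously-coloured siblings, so at least two colours are available from which to choose one that distinguishes $u$ from those siblings. A short induction on BFS-depth then shows that any colour-preserving automorphism fixes each vertex: the distinctive rainbow configuration in the closed neighbourhood of $v_0$ pins down the root, and once a parent is fixed, the injective colouring of its children pins each child down individually.

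For the forward direction of the characterisation, I would check the three families directly. In $K_n$ any two equally-coloured vertices may be swapped by an automorphism, forcing all $n$ vertices to use distinct colours, so $D(K_n)=n=\Delta+1$. In $K_{n,n}$ the same argument inside each part forces $n$ pairwise distinct colours per side, and the two sides may then be swapped unless their colour multisets differ, requiring an extra colour; hence $D(K_{n,n})=n+1=\Delta+1$. For $C_5$ with $\Delta=2$, a direct pigeonhole argument on $2$-colourings always exposes a non-trivial dihedral symmetry, so $D(C_5)=3$.

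The main obstacle is the converse: showing that every connected graph of maximum degree $\Delta$ outside the listed families admits a distinguishing colouring with only $\Delta$ colours. My plan here is to start from the $(\Delta+1)$-colouring above and exhibit a local modification — recolouring one carefully chosen vertex — that saves a colour without creating any new colour-preserving automorphism. The failure of such a saving should translate into a strong structural rigidity: every non-adjacent pair would have to be ``twins'' sharing the same neighbourhood, and every vertex would have to have degree exactly $\Delta$. Combined with connectivity, these constraints should force $G$ to be a complete graph, a complete bipartite graph $K_{n,n}$ (with the two parts as twin-classes), or, in the degenerate case $\Delta=2$, the $5$-cycle. Pushing the no-saving hypothesis to this dichotomy is the delicate step, and is the heart of the argument.
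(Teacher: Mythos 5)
The paper does not actually prove this lemma: it is imported verbatim from the literature (the theorem of Collins--Trenk, proved independently by Klav\v{z}ar, Wong and Zhu, cited as the two references for this bound), so there is no in-paper argument to compare yours against. Judged on its own terms, your proposal has two genuine gaps. First, in the upper bound, the step ``the distinctive rainbow configuration in the closed neighbourhood of $v_0$ pins down the root'' is unjustified: nothing in your greedy BFS colouring prevents some other vertex of colour $1$ from also carrying all of the colours $2,\dots,\Delta+1$ on its neighbourhood, in which case a colour-preserving automorphism could move $v_0$ there and your induction on BFS-depth never gets started. (Note also that you only require a child's colour to differ from its siblings', not from its parent's, which weakens the local rigidity you later rely on.) Fixing the root is exactly the point where the published proofs need an extra device, and it is precisely where the exceptional graphs $K_n$, $K_{n,n}$ and $C_5$ force their way into the statement.

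Second, and more seriously, the converse of the equality characterisation --- that every connected graph of maximum degree $\Delta$ other than $K_{\Delta+1}$, $K_{\Delta,\Delta}$ and $C_5$ satisfies $D(G)\le\Delta$ --- is the entire content of the lemma beyond the easy bound, and for this you offer a plan rather than a proof. The claim that the failure of a single-vertex recolouring forces every non-adjacent pair to be twins and every vertex to have degree exactly $\Delta$ is itself the delicate structural assertion that would need proving, and it is not clear that a one-vertex modification is even the right move: the known arguments recolour along the BFS structure and run a substantial case analysis (according to whether $G$ has a vertex of degree less than $\Delta$, whether it is bipartite, and so on). The phrases ``should translate'' and ``should force'' mark exactly the places where the argument is absent. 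Since the lemma is standard, the honest options are to cite it, as the paper does, or to reproduce one of the published proofs in full; the sketch as written does not establish the statement.
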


For $n\geq 2$, we  define a family of graphs $C_{n,K_{3,3}}$ as follows. For $1 \leq i \leq n$, let $H_i$ be disjoint copies of $K_{3,3}$ with bipartition $V(H_i) = X_i \cup Y_i$. Let $C_{n,K_{3,3}}$ be the graph obtained from this collection by adding a matching between $X_i$ and $Y_{i+1}$ for $1 \leq i \leq n-1$, and between $X_n$ and $Y_1$, see Figure \ref{fig:k33cycle}.

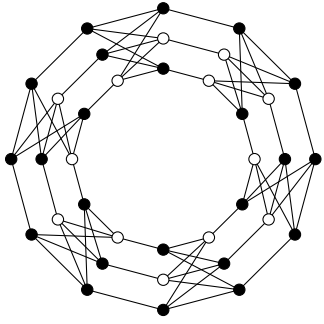
\begin{figure}
\centering
\begin{tikzpicture}
	\pgfdeclarelayer{E}
	\pgfdeclarelayer{V}
	\pgfsetlayers{E,V}
	\tikzmath{\i=1.2; \m=1.6; \o=2; \n=6; \s=180/\n;}
    \foreach \a in {1}
	{
    \begin{pgfonlayer}{V}
		\node[inner sep=1.5pt,circle,draw,fill=white] at ({2*\a*\s}:\i){};
		\node[inner sep=1.5pt,circle,draw,fill=white] at ({2*\a*\s}:\m){};
		\node[inner sep=1.5pt,circle,draw,fill] at ({2*\a*\s}:\o){};
		\node[inner sep=1.5pt,circle,draw,fill] at ({2*\a*\s+\s}:\i){};
		\node[inner sep=1.5pt,circle,draw,fill=white] at ({2*\a*\s+\s}:\m){};
		\node[inner sep=1.5pt,circle,draw,fill] at ({2*\a*\s+\s}:\o){};
    \end{pgfonlayer}
    }
	\foreach \a in {2,...,\n}
	{
    \begin{pgfonlayer}{V}
		\node[inner sep=1.5pt,circle,draw,fill=white] at ({2*\a*\s}:\i){};
		\node[inner sep=1.5pt,circle,draw,fill] at ({2*\a*\s}:\m){};
		\node[inner sep=1.5pt,circle,draw,fill] at ({2*\a*\s}:\o){};
		\node[inner sep=1.5pt,circle,draw,fill] at ({2*\a*\s+\s}:\i){};
		\node[inner sep=1.5pt,circle,draw,fill=white] at ({2*\a*\s+\s}:\m){};
		\node[inner sep=1.5pt,circle,draw,fill] at ({2*\a*\s+\s}:\o){};
    \end{pgfonlayer}
    }
	\foreach \a in {1,...,\n}
	{
    \begin{pgfonlayer}{E}
    \foreach \x in {\i,\m,\o}
    {
    \path[draw] ({2*\a*\s}:\x)--({\s+2*\a*\s}:\x);
    \foreach \y in {\i,\m,\o}
    {
        \path[draw] ({\s+2*\a*\s}:\x)--({2*\s+2*\a*\s}:\y);
    }
    }
    \end{pgfonlayer}
    }
\end{tikzpicture}
\caption{Distinguishing colouring of $C_{6,K_{3,3}}$}
\label{fig:k33cycle}
\end{figure}

\begin{lemma}
\label{lem:specialgraphs}
The following graphs have distinguishing number at most $2$: \begin{enumerate}[label=(\arabic*),leftmargin=*]
\item \label{itm:linegraph-not-exceptional}
The line graph of every non-exceptional $3$-valent graph;
\item \label{itm:linegraph-pet-q3-wn}
The line graphs of the following graphs: the Petersen graph, $Q_3$, $K_3 \square K_3$, $K_5 \times K_2$, and $W_n$ for every $n \geq 3$;
\item \label{itm:bipcomphae}
The bipartite complement of the Heawood graph;
\item \label{itm:4hypercube}
The $4$-dimensional hypercube $Q_4$;
\item \label{itm:46cage}
The $(4,6)$-cage, and 
\item \label{itm:cycleofk33}
The graph $C_{n,K_{3,3}}$ for $n \geq 2$.
\end{enumerate}
\end{lemma}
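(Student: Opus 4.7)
The proof treats the six items separately.

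For item~(1), let $G$ be a non-exceptional $3$-valent graph; then $G$ is connected, is not a tree, has $D(G)=2$, and (since $K_4$ is exceptional) has at least $6$ vertices. Lemma~\ref{lem:linegraph} gives $D(L(G)) = D'(G) \leq D(G) = 2$.

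For item~(2), I plan to show $D'(G) \leq 2$ for each listed graph $G$ and then invoke Lemma~\ref{lem:linegraph}. The Petersen graph, $Q_3$, $K_3 \square K_3$, $K_5 \times K_2$, and $W_n$ with $n \geq 4$ are all traceable (indeed, all are Hamiltonian, except the Petersen graph which is hypohamiltonian and in particular traceable) and have at least $7$ vertices, so Theorem~\ref{traceable} applies directly. The remaining case $W_3 \cong K_{2,2,2}$ (the octahedron, $6$ vertices) falls outside the hypothesis of Theorem~\ref{traceable} and so has to be handled by exhibiting an explicit distinguishing $2$-edge-colouring by direct inspection.

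Items~(3)--(5) are three specific graphs, handled by constructing distinguishing $2$-vertex-colourings directly. The strategy in each case is to select a small ``anchor'' subset of vertices to colour black so that its setwise stabiliser in $\Aut G$ is trivial; connectedness and the remaining graph structure then force any colour-preserving automorphism to be the identity. For $Q_4$ this amounts to a well-known small-anchor construction (e.g.\ a carefully chosen set of vertices of low Hamming weight); for the bipartite complement of the Heawood graph and for the $(4,6)$-cage, the explicit description of $\Aut G$ (in terms of the Fano plane in the first case, and the standard incidence-geometry description in the second) makes the verification routine.

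Item~(6) is the most delicate step because the construction must be uniform in $n$. Following the pattern of Figure~\ref{fig:k33cycle}, I would colour each $K_{3,3}$ block $H_i$ according to a fixed asymmetric template (say exactly one black vertex in $X_i$ and one in $Y_i$, placed so as to kill the non-trivial automorphisms of $K_{3,3}$ that preserve the two matchings to $H_{i-1}$ and $H_{i+1}$), and then modify the template in exactly one distinguished block to destroy the rotational symmetry of the outer cycle. Verifying this uniformly involves tracking how automorphisms can act on the block structure---any colour-preserving automorphism must fix the distinguished block, then fix each block setwise by induction along the cycle, and finally fix every vertex by the within-block asymmetry. The main obstacle is checking the small cases $n=2$ and $n=3$, where the cyclic structure degenerates (for $n=2$, each block has two matchings to the \emph{same} neighbour block) and additional automorphisms have to be ruled out by a more careful choice of template.
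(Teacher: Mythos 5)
Your treatment of items~\ref{itm:linegraph-not-exceptional}, \ref{itm:linegraph-pet-q3-wn} and \ref{itm:cycleofk33} follows essentially the same route as the paper (Lemma~\ref{lem:linegraph} for the first, Theorem~\ref{traceable} plus Lemma~\ref{lem:linegraph} for the second, a block-by-block template colouring for the last). In fact you are \emph{more} careful than the paper on item~\ref{itm:linegraph-pet-q3-wn}: the paper asserts that all base graphs have at least $7$ vertices, but $W_3\cong K_{2,2,2}$ has only $6$, so Theorem~\ref{traceable} does not apply to it and a separate check of $D'(W_3)\leq 2$ (or of $D(L(W_3))\leq 2$ directly) is genuinely needed, exactly as you say. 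Your concern about small $n$ in item~\ref{itm:cycleofk33} is also reasonable; the paper's colouring is only exhibited for $n=6$ with a one-line claim that it generalises.

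The gap is in items~\ref{itm:bipcomphae}--\ref{itm:46cage}: for these you describe a strategy (``choose an anchor set with trivial setwise stabiliser; verification is routine'') but never exhibit the colourings, and ``a set whose stabiliser forces every colour-preserving automorphism to be the identity'' is just a restatement of what a distinguishing colouring is. These are finite checks, so the approach would not fail, but as written nothing is proved. For item~\ref{itm:bipcomphae} you also miss a much shorter argument: the bipartite complement of the Heawood graph has the \emph{same} automorphism group, acting on the same vertex set, as the Heawood graph itself, hence the same distinguishing number, and the Heawood graph is a connected cubic vertex-transitive graph not on the exceptional list of Theorem~\ref{theo:cubic}, so its distinguishing number is $2$. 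For item~\ref{itm:4hypercube} the cleanest fix is simply to cite the known determination of $D(Q_n)$ for all hypercubes. For the $(4,6)$-cage an explicit colouring really is required; the paper builds one from the breadth-first tree at a vertex, using the property that two vertices in the same part have exactly one common neighbour, and some such concrete construction must appear in a complete proof.
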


\begin{proof}
Lemma~\ref{lem:linegraph} immediately implies \ref{itm:linegraph-not-exceptional}. 

For \ref{itm:linegraph-pet-q3-wn}, it suffices to observe that all the base graphs have at least $7$ vertices and a Hamiltonian path, and then apply Theorem~\ref{traceable} and Lemma~\ref{lem:linegraph} .

For \ref{itm:bipcomphae} note that the bipartite complement of the Heawood graph has the same automorphism group as the Heawood graph and thus also the same distinguishing number. By Theorem~\ref{theo:cubic}, this distinguishing number is $2$.

\ref{itm:4hypercube} follows from \cite{hypercubedistinguishing}, where distinguishing numbers of all hypercubes were determined.

For the proof of \ref{itm:46cage} first note that the $(4,6)$-cage is bipartite and any two vertices in each of its parts have exactly one neighbour in common. Let $v$ be any vertex, let $v_i$ for $1 \leq i \leq 4$ be the neighbours of $v$, and let $v_{ij}$ $1 \leq i \leq 3$ be the neighbours of $v_i$. 

Colour $v$ white, for $1 \leq i \leq 4$ colour $v_i$ black, and colour $v_{ij}$ black if $i < j$ and white otherwise. Finally colour the common neighbours of $v_{22}$ and $v_{32}$, and $v_{22}$ and $v_{33}$ black and all other vertices at distance $3$ from $v$ white, see Figure \ref{fig:46cage}.

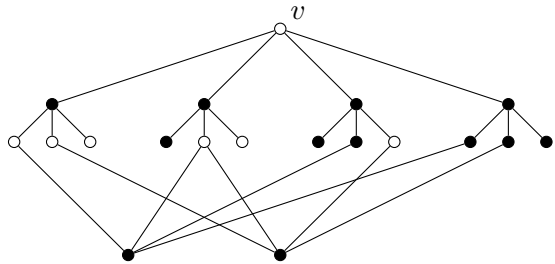
\begin{figure}
\centering
\begin{tikzpicture}
	\pgfdeclarelayer{E}
	\pgfdeclarelayer{V}
	\pgfsetlayers{E,V}
    \begin{pgfonlayer}{V}
		\node[inner sep=1.5pt,circle,draw,fill=white] (v) at (0,0) {};
    \node[above right] at (v) {$v$};
    \end{pgfonlayer}
    
	\foreach \a in {0,...,3}
	{
    \begin{pgfonlayer}{V}
		\node[inner sep=1.5pt,circle,draw,fill=black] (v\a) at (2*\a-3,-1) {};
    \end{pgfonlayer}
    \begin{pgfonlayer}{E}
        \path[draw] (v\a)--(v);
    \end{pgfonlayer}
    
	\foreach \b in {0,...,2}
	{\ifthenelse{\a>\b}{\colorlet{col}{black};}{\colorlet{col}{white};}
    \begin{pgfonlayer}{V}
		\node[inner sep=1.5pt,circle,draw,fill=col] (v\a\b) at (2*\a+.5*\b-3.5,-1.5) {};
    \end{pgfonlayer}
    \begin{pgfonlayer}{E}
        \path[draw] (v\a\b)--(v\a);
    \end{pgfonlayer}
    }
	}
    
    \begin{pgfonlayer}{V}
		\node[inner sep=1.5pt,circle,draw,fill=black] (w1) at (-2,-3) {};
		\node[inner sep=1.5pt,circle,draw,fill=black] (w2) at (0,-3) {};
    \end{pgfonlayer}
    
    \begin{pgfonlayer}{E}
        \path[draw] (w1)--(v00);
        \path[draw] (w1)--(v11);
        \path[draw] (w1)--(v21);
        \path[draw] (w1)--(v30);
        
        \path[draw] (w2)--(v01);
        \path[draw] (w2)--(v11);
        \path[draw] (w2)--(v22);
        \path[draw] (w2)--(v31);
    \end{pgfonlayer}
\end{tikzpicture}
\caption{Colouring of the $(4,6)$-cage, all vertices at distance $3$ from $v$ not shown in the picture are coloured white.}
\label{fig:46cage}
\end{figure}

Let $\gamma$ be a colour preserving automorphism. Then $\gamma$ must fix $v$, since it is the only white vertex with $4$ black neighbours. Furthermore $\gamma$ must fix all neighbours of $v$ since they have a different number of black neighbours. It must also fix the two black vertices at distance $3$ from $v$ for the same reason. Now it is easy to see that $\gamma$ has to fix all vertices at distance $2$ from $v$ and hence it is the identity.

For \ref{itm:cycleofk33}, consider the colouring shown in Figure~\ref{fig:k33cycle}. Note that the automorphism group has two orbits on edges: those that belong to a copy of $K_{3,3}$, and those that don't, which we call matching edges. There is a unique matching edge both of whose endpoints are coloured white. Every colour preserving automorphism must fix this edge and the matching it is contained in. The colours on the remaining edges in this matching make sure that every colour preserving automorphism must fix this matching pointwise, and thus must fix every matching between two copies of $K_{3,3}$ setwise. It is now easy to see that a colour preserving automorphism fixes all vertices of $C_{6,K_{3,3}}$. Finally note that this colouring can be generalised to a colouring of $C_{n,K_{3,3}}$ for any number $n \geq 2$. 
\end{proof}

\section{The proof of Theorem~\ref{thm:main}}

In this section, we prove our main result. Determining the distinguishing numbers of the exceptional graphs is straightforward and will be left to the reader.

To show that the remaining graphs have distinguishing number $2$, we distinguish cases according to the local group of $A:=\Aut G$. Define the \emph{type} of an edge $uv$ as the size of the orbit of $u$ under the action of the local group at $v$. By the orbit-stabiliser lemma, this is the index of $A_{uv}$ in $A_v$. Since by vertex transitivity $|A_v|=|A_u|$, this also shows that the type is well-defined, i.e.\ it does not depend on the endpoint of the edge. 

Note that since the orbits of the local group at $v$ partition the neighbourhood of $v$ the types of edges incident to $v$ correspond to a partition of $4$. Since $G$ is vertex-transitive, this partition is the same for every vertex. Since the only partitions of $4$ that do not contain a part of size $1$ are $(2,2)$ and $(4)$, we split up the proof of Theorem \ref{thm:main} into the following three cases:
\begin{enumerate}
\item There are edges of type $1$. This case is treated in Section \ref{sec:type1}.
\item All edges have type $2$. This is treated in Section~\ref{sec:type2}.
\item[4.] All edges have type $4$, and hence $G$ is arc-transitive. For this case, see Section~\ref{sec:arctransitive}.
\end{enumerate}

\subsection{Graphs with edges of type 1}
\label{sec:type1}

Let $G_{t \geq 2}$ be the graph obtained from $G$ by removing all edges of type $1$. Note that the components of $G_{t \geq 2}$ form a system of imprimitivity for $A$. 
We will need the following results.

\begin{lemma}
\label{lem:uniquetype1}
Assume that every vertex of $G$ is incident to a unique type $1$-edge, $G_{t \geq 2}$ is not connected, and any two components of $G_{t \geq 2}$ are connected by at most one type $1$-edge. Then $G$ has a distinguishing $2$-colouring.
\end{lemma}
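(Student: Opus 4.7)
The plan is to exhibit an explicit distinguishing $2$-colouring, built by exploiting the block structure together with the rigidity of the type-$1$ matching. Let $C_1,\dots,C_m$ be the components of $G_{t\geq 2}$; by hypothesis $m\geq 2$ and they form a system of imprimitivity for $A:=\Aut G$, so each induced subgraph $H_i:=G[C_i]$ is a $3$-regular vertex-transitive graph (the setwise stabiliser of $C_i$ in $A$ acts transitively on $C_i$ by graph automorphisms on $H_i$). The type-$1$ edges form a perfect matching $M$, and since at most one edge of $M$ joins any two components, the quotient graph $Q$ (vertex set $\{C_1,\dots,C_m\}$, edges given by $M$) is a simple $|C_1|$-regular graph, connected because $G$ is. Every $\alpha\in A$ preserves $M$ and induces an automorphism of $Q$.

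I would construct the colouring $c:V(G)\to\{0,1\}$ in two coordinated phases. First, I would fix a reference component $C_1$ and colour it so as to kill the action of the setwise stabiliser $A_{\{C_1\}}$ on $C_1$. When $H_1$ is not one of the exceptional cubic graphs appearing in Theorem~\ref{theo:cubic}, a distinguishing $2$-colouring of $H_1$ works directly. For exceptional $H_1$, I plan to exploit the type-$1$ matching as an additional labelling, since each vertex of $C_1$ carries a canonical tag (the component of $Q$ to which it is matched via $M$), and together with a carefully chosen $2$-colouring the resulting ``tagged'' structure is rigid. Second, I would propagate the colouring through a BFS spanning tree of $Q$ rooted at $C_1$, colouring each new component $C_j$ so that its ``signature'' (the colour pattern relative to the matching edge joining it to its BFS-parent) differs from that of every previously coloured component.

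To verify the construction, I would argue as follows: any colour-preserving $\alpha\in A$ projects to an automorphism of $Q$ preserving the component-signatures, which by construction are pairwise distinct, so the projected action fixes each $C_i$ setwise. The first-phase colouring then forces $\alpha|_{C_1}=\mathrm{id}$. The matching $M$ pins each matched vertex in a BFS-neighbour of $C_1$, and combined with the tagged distinguishing colouring on that neighbour we conclude that $\alpha$ acts trivially there too. Induction along the BFS tree of $Q$ then yields $\alpha=\mathrm{id}$, as required.

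The main obstacle will be the case in which $H_i$ is isomorphic to one of $K_4$, $K_{3,3}$, $Q_3$, or the Petersen graph, since as abstract graphs these have distinguishing number greater than $2$. The key mitigating observations are that $A_{\{C_i\}}$ restricted to $C_i$ need not be all of $\Aut H_i$, and that the type-$1$ marking typically reduces the effective symmetry further; a short case analysis should produce a suitable $2$-colouring. A secondary technical difficulty is to arrange distinct component-signatures coherently when many components lie in a single $A$-orbit and hence carry the same intrinsic data, which is where the uniqueness of the matching edge between any two components becomes crucial.
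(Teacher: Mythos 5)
There is a genuine gap, and it is in the step your whole construction hinges on: making the component ``signatures'' pairwise distinct. Each component has a fixed number $k$ of vertices, so a $2$-colouring of a component admits at most $2^k$ possible patterns (and effectively far fewer once you insist the pattern be readable relative to a single marked matching edge). But the number $m$ of components is the order of the connected $k$-regular quotient graph $Q$, which is unbounded. As soon as $m$ exceeds the number of available patterns, you cannot colour ``each new component $C_j$ so that its signature differs from that of every previously coloured component,'' so the propagation phase cannot be carried out for large $G$. (There is a secondary soundness issue even when $m$ is small: your signature is defined relative to the matching edge to the BFS-parent, and a colour-preserving automorphism of $Q$ has no a priori reason to respect the BFS tree, so ``preserving the component-signatures'' is not a legitimate invariant as stated.)

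The fix is to drop the demand that signatures be pairwise distinct. Assign to each component only a \emph{count} of black vertices, a number in $\{0,1,\dots,k\}$; this induces a $(k+1)$-colouring of the quotient $Q$, and since $D(Q)\le\Delta(Q)+1=k+1$ by Lemma~\ref{degreebound}, you can choose these counts so that the induced colouring of $Q$ is distinguishing. Then any colour-preserving automorphism of $G$ fixes every component setwise, and now the hypotheses finish the job with no further colouring: each type-$1$ edge is the unique such edge between its pair of components, hence is fixed, and since its two endpoints lie in distinct (setwise fixed) components, both endpoints are fixed; as every vertex lies on a type-$1$ edge, the automorphism is the identity. In particular your entire first phase --- rigidifying $C_1$ internally and the case analysis for exceptional cubic components $K_4$, $K_{3,3}$, $Q_3$ and the Petersen graph via ``tags'' --- is unnecessary, which is fortunate, since that part of your argument was left as an unverified sketch.
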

\begin{proof}
Let $k$ be the number of vertices in a component of $G_{t \geq 2}$. Consider the graph $H$ obtained from $G$ by contracting every component of $G_{t \geq 2}$ to a single vertex. By our assumptions, $H$ is a $k$-regular graph and it follows from Lemma~\ref{degreebound} that its distinguishing number is at most $k+1$. Let $c'$ be a distinguishing colouring of $H$ with colours $\{0,1,\dots,k\}$. We now colour $G$ in the following way: in every component of $G_{t \geq 2}$, we colour as many vertices black as the colour of the corresponding vertex of $H$ suggests. 
 
Since $c'$ is distinguishing, any automorphism which preserves the resulting colouring has to fix all components of $G_{t \geq 2}$ setwise. As every type 1 edge is uniquely identified by the components it connects, each type 1 edge and hence also every vertex must be fixed by every colour-preserving automorphism.
\end{proof}

\begin{lemma}
\label{lem:deletetype1}
Let $G$ be a connected vertex-transitive graph. Assume that $G_{t \geq 2}$ is not connected, let $H$ be a component of $G_{t \geq 2}$ and let $v \in H$. If $H$ admits a $2$-colouring $c'$ such that the only automorphism of $H$ fixing $v$ and preserving $c'$ is the identity, then  $G$ has a distinguishing $2$-colouring.
\end{lemma}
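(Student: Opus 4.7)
The plan is to construct a distinguishing $2$-colouring of $G$ using $c'$ on $H$ together with a careful marking on another component. The key structural observation I would record first is that for every type-$1$ edge $uu'$, the pointwise stabilisers $A_u$ and $A_{u'}$ coincide: the type-$1$ condition forces $A_{u'}$ to fix $u$, so $A_{u'} \subseteq A_u$, and vertex-transitivity gives $|A_u| = |A_{u'}|$, hence equality. Consequently, any $\gamma \in A := \Aut G$ fixing a vertex $u$ also fixes every type-$1$ neighbour of $u$.

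Let $H_1 = H, H_2, \dots, H_m$ be the components of $G_{t \ge 2}$ (so $m \ge 2$). Vertex-transitivity combined with connectedness of $G$ and disconnectedness of $G_{t \ge 2}$ forces every vertex to have at least one type-$1$ neighbour; fix such a neighbour $w$ of $v$ and, relabelling if necessary, assume $w \in H_2$. I would colour $H_1$ by $c'$, colour $w$ black, and colour every other vertex of $G$ white. If $c'$ does not already have at least two black vertices, I would first modify it: flipping the colour of $v$ in $c'$ preserves the hypothesis, since the $v$-fixing $c'$-preserving stabiliser is unchanged by the colour of $v$ alone, and in the residual sub-cases one checks that the hypothesis forces $\Aut(H)_v = 1$, so that any $2$-colouring of $H$ with at least two black vertices satisfies the hypothesis.

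Under this colouring the component black-counts are $\ge 2$ for $H_1$, exactly $1$ for $H_2$, and $0$ for each $H_i$ with $i \ge 3$; so any colour-preserving $\gamma \in A$ must fix $H_1$ and $H_2$ setwise. Since $w$ is the unique black vertex of $H_2$, $\gamma$ fixes $w$; by the structural observation $\gamma$ also fixes $v$, which is a type-$1$ neighbour of $w$. Therefore $\gamma|_H$ is a $c'$-preserving automorphism of $H$ fixing $v$, and is the identity by hypothesis.

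The remaining task, which I expect to be the main obstacle, is extending this to $\gamma = \mathrm{id}$ on all of $G$. Iterating the structural observation, $\gamma$ fixes every vertex reachable from $V(H) \cup \{w\}$ along type-$1$ edges, so it remains to argue that the type-$1$ closure of $V(H) \cup \{w\}$ is all of $V(G)$. I would handle this by passing to the quotient multigraph whose vertices are the components of $G_{t \ge 2}$ and whose edges are the type-$1$ edges of $G$: this multigraph is vertex-transitive and connected (since $G$ is), and the propagation above follows the spanning structure of this quotient. In the degenerate cases where the type-$1$ closure of $V(H)\cup\{w\}$ misses some components, one adds a single additional marking in each missed component, and a short case analysis confirms that such adjustments can always be made while preserving the uniqueness of the black-counts of $H_1$ and $H_2$ used above.
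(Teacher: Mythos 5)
Your first half is sound: the observation that fixing one endpoint of a type-$1$ edge fixes the other, the counting argument making $H_2$ the unique component with exactly one black vertex, and the chain ``$\gamma$ fixes $w$, hence $v$, hence $H_1$ pointwise'' all work (modulo the degenerate case $|V(H)|=1$, where two black vertices in $H_1$ cannot be arranged). The genuine gap is in the propagation step, and your proposed patch does not close it. From the pointwise-fixed set $V(H_1)\cup\{w\}$ you can only propagate along type-$1$ edges, and this hands you \emph{single vertices} of the other components, not the components themselves. A component $H_i$ with $i\ge 3$ is entirely white in your colouring, so knowing that $\gamma$ fixes some $u\in H_i$ only places $\gamma|_{H_i}$ in $\Aut(H_i)_u$ --- and the hypothesis of the lemma does not make this group trivial; it only says that the identity is the unique element of $\Aut(H)_v$ that \emph{also preserves $c'$}. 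In the cases where the lemma is actually applied ($H$ a cycle, $Q_3$, the Petersen graph) the point stabiliser is nontrivial, so $\gamma$ may act nontrivially on $H_i$, and the propagation cannot continue through $H_i$ to further components (crossing a component requires it to be fixed pointwise, since only type-$1$ edges transfer fixedness vertex-by-vertex). Adding ``a single additional marking'' per component fails for the same reason: one black vertex in an otherwise white copy of $H$ does not break a nontrivial point stabiliser, e.g.\ the reflection of a cycle through the marked vertex survives.

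The missing idea --- and what the paper does --- is to colour \emph{every} component $H_i$ with $i\neq 1$ by a transported copy of $c'$, anchored via an isomorphism sending $v$ to the first vertex $v_i$ of a shortest path from $H_i$ to $v_1$ in $(G-H_1)+v_1$. Then the induction runs: $v_i$ is fixed because it is joined by a type-$1$ edge to a vertex in an already pointwise-fixed component (or to $v_1$), and the hypothesis applied to the anchored copy of $c'$ forces all of $H_i$ to be fixed pointwise, allowing the argument to move on. The base component is the one coloured trivially (a single black vertex, making it identifiable), and it is recovered last through its type-$1$ edges into the other, by then pointwise-fixed, components. In short, your colouring places $c'$ on the one component where it is least needed and omits it from the components where it is essential; this is not a technicality one can wave away with a short case analysis, but the structural core of the proof.
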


\begin{proof}
Denote the components of $G_{t \geq 2}$ by $H_1, \dots, H_R$. Note that each $H_i$ is isomorphic to $H$. Let $v_1 \in H_1$. Note that the graph obtained from $G$ by contracting the components $H_1, \dots, H_R$ is connected and vertex-transitive and thus at least $2$-connected. Hence $G - H_1$ is connected, and thus $(G - H_1) + v_1$ is connected as well. 

For $i\in\{2,\ldots,R\}$, pick some shortest path from $H_i$ to $v_1$ in $(G - H_1) + v_1$ and let $v_i$ and $e_i$ be the first vertex and edge of this path, respectively. Without loss of generality we may assume that the number of black vertices in $c'$ is not exactly one---otherwise change the colour of $v$ to obtain a colouring with this property. Let $\pi_i \colon H \to H_i$ be an isomorphism which maps $v$ to $v_i$. Such an isomorphism exists because $G$ (and thus also $H$) is vertex-transitive. Now define a colouring $c$ of $G$ by
\[
c(x) =
\begin{cases}
\mathrm{black} & \mathrm{if}\; x = v_1,\\
\mathrm{white} & \mathrm{if}\; x \in H_1 - v_1,\\
c'(\pi_i^{-1}(x)) & \mathrm{if}\; x \in H_i \;\mathrm{for}\; i \neq 1.
\end{cases}
\]
Let $\gamma$ be an automorphism of $G$ preserving $c$. We show that $\gamma$ fixes every vertex and thus $c$ is distinguishing.

First, note that $\gamma$ must fix $v_1$, since $v_1$ is the only black vertex in $H_1$ which in turn is the only component with a unique black vertex. 

Next we show that, for $i \neq 1$, every $H_i$  must be fixed pointwise by $\gamma$. Assume not.  Let $H_i$ be a component such that the distance from $H_i$ to $v_1$ is minimal, among the components that are not fixed pointwise. The endpoint $u_i$ of $e_i$ which does not lie in $H_i$ is either $v_1$, or it lies in some component $H_j$ which is closer to $v_1$. Hence $u_i$ is fixed by $\gamma$. Since $e_1$ has type $1$, $\gamma$ must also fix $v_i$ and thus induce an automorphism of $H_i$. By hypothesis, this induced automorphism is trivial and thus $\gamma$ fixes $H_i$ pointwise. 

Finally, let $x \in H_1 - v_1$. Then $x$ is incident to an edge of type 1 which connects $H_1$ to a different component $H_i$. Since the other endpoint of this edge is fixed by $\gamma$, the same must be true for $x$.
\end{proof}

\begin{corollary}
\label{cor:deletetype1}
Let $G$ be a connected, vertex-transitive graph and let $H$ be a component of $G_{t \geq 2}$. If $H$ has a distinguishing $2$-colouring, then so does $G$.
\end{corollary}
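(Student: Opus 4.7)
The plan is to derive the corollary directly from Lemma~\ref{lem:deletetype1}, after separating off the trivial case in which $G_{t\geq 2}$ happens to be connected. Since an automorphism of $G$ must preserve edge types (the type of an edge is defined purely in terms of the action of $\Aut G$), the subgraph $G_{t\geq 2}$ is $\Aut G$-invariant, and so $\Aut G$, acting on $V(G) = V(G_{t\geq 2})$, embeds into $\Aut G_{t\geq 2}$. This is the only conceptual ingredient beyond what Lemma~\ref{lem:deletetype1} already gives us.

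First I would dispose of the case where $G_{t\geq 2}$ is connected. Then $H = G_{t\geq 2}$ is a spanning subgraph of $G$, and by the invariance remark above, any $2$-colouring of $V(H)$ that is preserved only by the identity in $\Aut H$ is, a fortiori, preserved only by the identity in $\Aut G$. So the given distinguishing $2$-colouring of $H$ is itself a distinguishing $2$-colouring of $G$.

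In the remaining case, $G_{t\geq 2}$ is disconnected, so Lemma~\ref{lem:deletetype1} is available. Let $c'$ be the given distinguishing $2$-colouring of $H$, and fix any vertex $v\in H$. Since the only automorphism of $H$ preserving $c'$ is the identity, certainly the only automorphism of $H$ that simultaneously fixes $v$ and preserves $c'$ is the identity. Thus the hypothesis of Lemma~\ref{lem:deletetype1} is satisfied, and the lemma yields a distinguishing $2$-colouring of $G$.

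There is no real obstacle; the corollary is essentially just Lemma~\ref{lem:deletetype1} stripped of the auxiliary ``fix $v$'' hypothesis, which is automatically satisfied by any distinguishing colouring. The only care needed is in recognising that the lemma's assumption that $G_{t\geq 2}$ be disconnected is genuinely used, and must be handled separately by the spanning-subgraph observation above.
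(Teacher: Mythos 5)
Your proof is correct and follows essentially the same route as the paper's: split into the case where $H$ is the only component of $G_{t\geq 2}$ (where the colouring transfers directly since $\Aut G$ preserves edge types and hence embeds into $\Aut G_{t\geq 2}$) and the disconnected case, where Lemma~\ref{lem:deletetype1} applies because a distinguishing colouring of $H$ in particular satisfies the ``fix $v$'' hypothesis. The paper states this in one line; you have merely spelled out the details.
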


\begin{proof}
If $H$ is the only component of $G_{t \geq 2}$, then a distinguishing colouring of $H$ is also distinguishing for $G$, otherwise apply Lemma \ref{lem:deletetype1}.
\end{proof}

\begin{theorem}
\label{thm:type1edges}
Let $G$ be a connected $4$-valent vertex-transitive graph containing edges of type $1$. Then $D(G) = 2$, unless $G$ is $K_4 \square K_2$. 
\end{theorem}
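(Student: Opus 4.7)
My plan is to case-split on the partition of the edges at a vertex into types. Since $G$ is $4$-valent and some edge has type~$1$, this partition is one of $(1,1,1,1)$, $(2,1,1)$ or $(3,1)$. In each case the components of $G_{t\geq 2}$ form a block system for $A$, and the overall strategy is to reduce $2$-distinguishability of $G$ to a property of a single component via Corollary~\ref{cor:deletetype1}, Lemma~\ref{lem:deletetype1} or Lemma~\ref{lem:uniquetype1}.

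Partition $(1,1,1,1)$ is disposed of directly: triviality of the local group together with a short connectivity induction (if $\alpha\in A_v$ fixes $N(v)$ pointwise, then $\alpha\in A_u$ for every $u\in N(v)$ and hence fixes $N(u)$ pointwise, and this propagates through the connected graph) forces $A_v=1$, so $A$ acts regularly on $V(G)$ and colouring a single vertex black is distinguishing. For partition $(3,1)$ every vertex is incident with a unique type-$1$ edge, and I would first argue that each component $H$ of $G_{t\geq 2}$ is a $3$-valent arc-transitive graph (the size-$3$ orbit of the local group of $G$ at $v$ is exactly $N_H(v)$). Theorem~\ref{theo:cubic} then reduces matters to the case $H\in\{K_4,K_{3,3},Q_3,\mathrm{Petersen}\}$. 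For $H=Q_3$ and $H=\mathrm{Petersen}$ the vertex stabiliser of $H$ is small enough that an explicit $2$-colouring satisfying the weaker hypothesis of Lemma~\ref{lem:deletetype1} can be written down and verified by inspection. For $H=K_{3,3}$ I would attempt Lemma~\ref{lem:uniquetype1}, and in the residual case (two components joined by more than one type-$1$ edge) use vertex-transitivity to force $G$ to be one of the bipartite complement of the Heawood graph, the $(4,6)$-cage or some $C_{n,K_{3,3}}$, all of which have distinguishing number $2$ by Lemma~\ref{lem:specialgraphs}.

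For partition $(2,1,1)$ the components of $G_{t\geq 2}$ are cycles; those of length at least $6$ are handled by Corollary~\ref{cor:deletetype1}, while cycles of length $3$, $4$ or $5$ require a case analysis of the type-$1$ structure, again using Lemma~\ref{lem:uniquetype1} when possible or identifying $G$ with a graph covered by Lemma~\ref{lem:specialgraphs}. The crux of the proof, and the only source of an exception, is the $H=K_4$ subcase of partition $(3,1)$. Here $K_4$ is too small for Lemma~\ref{lem:deletetype1} to help: any $2$-colouring of the three non-fixed vertices of $K_4$ is preserved by some non-trivial element of $\Aut(K_4)_v\cong S_3$. The hardest step is therefore to use the global structure of $G$ to show that Lemma~\ref{lem:uniquetype1} succeeds whenever there are three or more $K_4$-components, and that the two-component case is forced by vertex-transitivity to yield $G\cong K_4\square K_2$.
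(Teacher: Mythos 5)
Your overall strategy---splitting on the local partition $(1,1,1,1)$, $(2,1,1)$, $(3,1)$, reducing to components of $G_{t\geq 2}$ via Corollary~\ref{cor:deletetype1} and Lemmas~\ref{lem:deletetype1} and~\ref{lem:uniquetype1}, and isolating the $K_4$ subcase of $(3,1)$ as the source of the exception---is exactly the paper's, and your treatment of $(1,1,1,1)$ and of the $Q_3$/Petersen subcases is correct. There are, however, two concrete gaps. First, in the $(2,1,1)$ case your plan for short cycles leans on Lemma~\ref{lem:uniquetype1}, whose hypothesis (``every vertex is incident to a \emph{unique} type-$1$ edge'') fails here: each vertex lies on exactly two type-$1$ edges. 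The fallback of ``identifying $G$ with a graph covered by Lemma~\ref{lem:specialgraphs}'' is not viable either, since a graph whose type-$2$ edges form many triangles, squares or pentagons need not be any of the listed special graphs. The case is in fact easy, but with a different tool: for any $n\geq 3$ the cycle $C_n$ admits a $2$-colouring whose only colour-preserving automorphism fixing a prescribed vertex is the identity, so Lemma~\ref{lem:deletetype1} applies directly whenever $G_{t\geq 2}$ has more than one component, with no case analysis on the cycle length.

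Second, in the $(3,1)$ case with $H\in\{K_4,K_{3,3},Q_3,\mathrm{Petersen}\}$ you never address the possibility that $G_{t\geq 2}$ is \emph{connected}, i.e.\ that $G$ is $H$ together with a perfect matching of type-$1$ edges on $V(H)$. None of your three reduction lemmas applies there (Corollary~\ref{cor:deletetype1} only helps when $D(H)=2$, and the other two lemmas assume disconnectedness), so this situation must be excluded separately; the paper does so by observing that arc-transitivity of the induced action on $H$ forbids a type-$1$ edge between vertices at distance at most $2$ in $H$, which rules out $K_4$, $K_{3,3}$ and the Petersen graph (diameter at most $2$) and shows that the only candidate built on $Q_3$ is $K_{4,4}$, which has no type-$1$ edges. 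A smaller point: in the residual $K_{3,3}$ subcase the bipartite complement of the Heawood graph and the $(4,6)$-cage cannot arise, as both are arc-transitive and hence have no type-$1$ edges; the forcing argument you actually need is that the induced arc-transitive action on each $K_{3,3}$ component sends all type-$1$ edges from one bipartition class to a single neighbouring component, which pins $G$ down to $C_{n,K_{3,3}}$. That same arc-transitivity observation is also what makes your ``three or more $K_4$-components'' claim go through, so it is worth stating explicitly rather than leaving it implicit in ``use vertex-transitivity''.
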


\begin{proof}
If all edges are of type $1$, then $A_v=1$ and thus  colouring one vertex black and all other vertices white yields a distinguishing colouring.

Next assume that the local group has two orbits of size $1$ and one orbit of size $2$. In this case $G_{t \geq 2}$ is a union of cycles. If there is only one such cycle, then it must have length $6$ or more, and hence $G$ is $2$-distinguishable by Corollary \ref{cor:deletetype1}. If there is more than one, then the conditions of Lemma \ref{lem:deletetype1} are satisfied.

Finally consider the case where the local group has one orbit of size $1$ and one orbit of size $3$. All components of $G_{t \geq 2}$ are isomorphic to some $3$-regular vertex-transitive graph $G'$. Also note that the induced action of $A$ on $G'$ is arc-transitive.

If $G'$ has distinguishing number $2$, then we can apply Corollary \ref{cor:deletetype1} to obtain a distinguishing $2$-colouring of $G$. By Theorem~\ref{theo:cubic}, the only other possibility is that $G'$ is isomorphic to one of  $K_4, K_{3,3}, Q_3$ or the Petersen graph.

If $G_{t \geq 2}$ is connected, then $G$ is obtained from $G'$ by adding edges of type $1$. Since $A$ is arc-transitive on $G'$, no edge of type $1$ can connect two neighbours (in $G'$) of the same vertex. Otherwise any two neighbours of this vertex would have to be connected by an edge, contradicting the fact that each vertex of $G$ is adjacent to only one edge of type $1$. Hence an edge of type $1$ can't connect vertices at distance at most $2$ in $G'$. This rules out $K_4, K_{3,3}$ and the Petersen graph as possibilities for $G'$, since they have diameter at most $2$. The only way to add edges with respect to this constraint in the cube $Q_3$ yields $G=K_{4,4}$ which does not contain edges of type $1$.

Thus we can assume that $G_{t \geq 2}$ is not connected. Both the Petersen graph and $Q_3$ have colourings satisfying the condition of Lemma \ref{lem:deletetype1}, see Figure~\ref{fig:cubepetersenvertexstabiliser}. Hence if $G'$ is one of them, then $G$ has a distinguishing $2$-colouring.

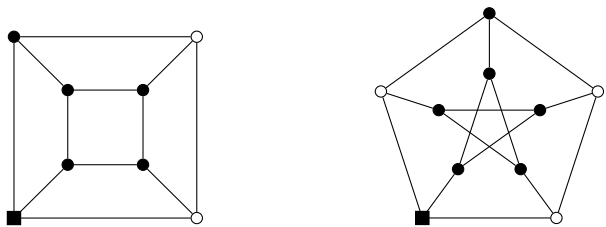
\begin{figure}
\centering
\begin{tikzpicture}
	\pgfdeclarelayer{E}
	\pgfdeclarelayer{V}
	\pgfsetlayers{E,V}
	\tikzmath{\i=.7; \o=1.7; \n=4; \s=360/\n;}
    \begin{pgfonlayer}{V}
	\foreach \a in {1,...,2}
	{
		\node[inner sep=1.5pt,circle,draw,fill] (v\a) at ({\a*\s+45}:\o){};
		\node[inner sep=1.5pt,circle,draw,fill] (w\a) at ({\a*\s+45}:\i){};
	}
	\foreach \a in {3,4}
	{
		\node[inner sep=1.5pt,circle,draw,fill=white] (v\a) at ({\a*\s+45}:\o){};
		\node[inner sep=1.5pt,circle,draw,fill] (w\a) at ({\a*\s+45}:\i){};
	}
    \node[inner sep=2.5pt,rectangle,draw,fill] (v) at (v2){};
    \end{pgfonlayer}

    \begin{pgfonlayer}{E}
        \path[draw] (v1)--(w1);
        \path[draw] (v2)--(w2);
        \path[draw] (v3)--(w3);
        \path[draw] (v4)--(w4);
        
        \path[draw] (v1)--(v2);
        \path[draw] (v2)--(v3);
        \path[draw] (v3)--(v4);
        \path[draw] (v4)--(v1);
        
        \path[draw] (w1)--(w2);
        \path[draw] (w2)--(w3);
        \path[draw] (w3)--(w4);
        \path[draw] (w4)--(w1);
    \end{pgfonlayer}
\end{tikzpicture}
\hspace{2cm}
\begin{tikzpicture}
	\pgfdeclarelayer{E}
	\pgfdeclarelayer{V}
	\pgfsetlayers{E,V}
	\tikzmath{\i=.7; \o=1.5; \n=5; \s=360/\n;}
    \begin{pgfonlayer}{V}
	\foreach \a in {1,...,4}
	{
		\node[inner sep=1.5pt,circle,draw,fill=white] (v\a) at ({\a*\s+90}:\o){};
		\node[inner sep=1.5pt,circle,draw,fill] (w\a) at ({\a*\s+90}:\i){};
     }
	\foreach \a in {5}
	{
		\node[inner sep=1.5pt,circle,draw,fill] (v\a) at ({\a*\s+90}:\o){};
		\node[inner sep=1.5pt,circle,draw,fill] (w\a) at ({\a*\s+90}:\i){};
     }
    \node[inner sep=2.5pt,rectangle,draw,fill] (v) at (v2){};
    \end{pgfonlayer}

    \begin{pgfonlayer}{E}
        \path[draw] (v1)--(w1);
        \path[draw] (v2)--(w2);
        \path[draw] (v3)--(w3);
        \path[draw] (v4)--(w4);
        \path[draw] (v5)--(w5);
        
        \path[draw] (v1)--(v2);
        \path[draw] (v2)--(v3);
        \path[draw] (v3)--(v4);
        \path[draw] (v4)--(v5);
        \path[draw] (v5)--(v1);
        
        \path[draw] (w1)--(w3);
        \path[draw] (w2)--(w4);
        \path[draw] (w3)--(w5);
        \path[draw] (w4)--(w1);
        \path[draw] (w5)--(w2);
    \end{pgfonlayer}
\end{tikzpicture}
\caption{Colourings satisfying the condition of Lemma \ref{lem:deletetype1}, $v$ is the square vertex.}
\label{fig:cubepetersenvertexstabiliser}
\end{figure}

We may thus assume that  $G'$ is either $K_4$ or $K_{3,3}$. By Lemma \ref{lem:uniquetype1} we may assume that there is a pair of components of $G_{t \geq 2}$ connected by multiple type $1$ edges. Since $G$ is vertex-transitive and each vertex is incident to a unique edge of type 1, the number of type 1 edges between any pair of adjacent components of $G_{t \geq 2}$ is independent of the choice of the pair. Furthermore, recall that $A$ acts arc-transitively on $G'$. Hence if two adjacent vertices in a component $H$ are both adjacent to the same component $H'$ (via type 1 edges), then all vertices of $H$ are adjacent to $H'$. For $G' = K_4$, this is the only possibility, and the resulting graph is $G=K_4 \square K_2$. For $G' = K_{3,3}$, the above observation tells us that all vertices in the same bipartite class of a component send their type 1 edges to the same component, and hence $G = C_{n,K_{3,3}}$ (see Figure~\ref{fig:k33cycle}) for some $n\geq 2$, which has distinguishing number $2$.
\end{proof}

\subsection{Graphs with only edges of type 2}
\label{sec:type2}
In this section, we assume that all edges of $G$ are of type $2$. This implies that $A$ has two orbits on arcs and therefore at most two orbits on edges. We distinguish two subcases according to whether $G$ is edge-transitive or not.

\subsubsection{Edge-transitive case}
\begin{theorem}
Let $G$ be a connected $4$-valent graph that is vertex- and edge-transitive but not arc-transitive. Then $D(G) = 2$.
\end{theorem}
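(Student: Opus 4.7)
The hypothesis that $G$ is vertex- and edge-transitive but not arc-transitive, with all edges of type $2$, forces $A:=\Aut G$ to have exactly two orbits on arcs; moreover the two arcs underlying any given edge lie in different orbits, for otherwise $G$ would be arc-transitive. Hence $A$ preserves an orientation $\vec G$ in which every vertex $v$ has two in-neighbours $N^{-}(v)=\{u_1,u_2\}$ and two out-neighbours $N^{+}(v)=\{w_1,w_2\}$, and $A_v$ is transitive on each of these sets. The image of $A_v$ on $N(v)$ is therefore a transitive-on-each-part subgroup of $\mathrm{Sym}(N^{+}(v))\times\mathrm{Sym}(N^{-}(v))$, and so has order $2$ or $4$.

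The plan is to construct a distinguishing $2$-colouring by specifying a small \emph{distinguishing set} $S\subseteq V(G)$: a set whose pointwise stabiliser in $A$ is trivial. I would start with $S_0:=\{v,w_1\}$. Because $\vec G$ is $A$-invariant, a colour-preserving automorphism $\gamma$ cannot swap $v$ and $w_1$ (such a swap would reverse the arc $v\to w_1$), and $v$ is singled out among the two black vertices as the one with a black out-neighbour; hence $\gamma$ fixes both $v$ and $w_1$. The residual stabiliser $A_{v,w_1}$ has image in $\mathrm{Sym}(N(v))$ of order at most $2$, since it fixes $w_1$ (and hence $w_2$) and can only possibly swap $u_1$ and $u_2$. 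If this remaining swap can actually be realised, I enlarge $S_0$ to $S_1:=\{v,w_1,u_1\}$, which guarantees that every colour-preserving $\gamma$ fixes each of $v$, $w_1$, $w_2$, $u_1$, $u_2$.

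The main obstacle is then to show that the pointwise stabiliser of $S_1$ is actually trivial on all of $V(G)$. This reduces to analysing the kernel $K$ of the action of $A_v$ on $N(v)$: every element of $A_{S_1}$ lies in $K$, since it fixes every neighbour of $v$. If $K$ is trivial we are done; otherwise I would propagate the construction by colouring an out-neighbour of $w_1$ (and, if still necessary, further canonically chosen vertices along the orientation), and argue that this iteration terminates after a bounded number of steps. The $3$-connectivity given by Lemma~\ref{lem:connectivity} ensures that $\vec G$ is rich enough for propagation to succeed, and the standard alternating-cycle structure of $4$-valent half-arc-transitive graphs underlies the proof that the additional black vertices do not inadvertently introduce new symmetries. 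Verifying termination of this process, together with the fact that the resulting $|S|$ remains small enough that the colouring is genuinely distinguishing, will be the technical core of the argument.
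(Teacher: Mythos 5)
Your setup is the same as the paper's: the two arc-orbits give an $A$-invariant orientation $\vec G$ of in- and out-valency $2$, and the goal is to pin down a vertex and then kill the residual stabiliser. But the proposal stops exactly where the real work begins. The step you describe as ``argue that this iteration terminates after a bounded number of steps'' is not a routine verification --- it is the entire content of the theorem. What you are implicitly asserting is that the pointwise stabiliser of a sufficiently long directed path is trivial, i.e.\ that $A$ acts \emph{regularly} on directed $s$-arcs for some $s$. The paper does not reprove this; it invokes a known structural result for arc-transitive digraphs of out-valency $2$ (\cite[Lemma~5.4(v)]{PotVer}), and then simply colours one directed $s$-arc $P=(v_0,\dots,v_s)$ black. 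Note that the chain of pointwise stabilisers of longer and longer directed paths is decreasing and so must stabilise, but it is not automatic that it stabilises at the identity; your appeal to $3$-connectivity and to ``the standard alternating-cycle structure'' does not supply this, and no argument is given.

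There is a second, related gap: as your black set $S$ grows, you must re-establish that a colour-preserving automorphism fixes $S$ \emph{pointwise}, not merely setwise. Your two-vertex anchor argument ($v$ is the black vertex with a black out-neighbour) does not survive the later additions without further checking. The paper avoids this by making the black set a single directed $s$-arc, so that once the initial vertex $v_0$ is identified --- as the unique black vertex with no black in-neighbour --- the rest of $P$ is fixed by following the orientation. Even then a case must be excluded: if there is an arc from $v_s$ back to $v_0$, the anchor fails, and the paper shows this situation forces $G$ to be a (directed) wreath graph and hence arc-transitive, contradicting the hypothesis. Your proposal has no analogue of either the anchoring argument for the full set $S$ or this degenerate case. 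So the approach is salvageable and is essentially the paper's, but as written it assumes the two statements that actually need proof.
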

\begin{proof}
In this case, $A$ has two orbits on arcs and each arc is in a different orbit than its inverse arc. By removing one of the two orbits, $G$ becomes an arc-transitive directed graph in which every vertex has in- and out-degree $2$.  There is some $s \geq 1$ such that $A$ acts regularly on directed $s$-arcs (see for example \cite[Lemma 5.4(v)]{PotVer}). 

Let $P= (v_0, \dots, v_s)$  be a directed $s$-arc in $G$. Suppose for a contradiction that there is an arc from $v_s$ to $v_0$. Clearly, in this case $s \geq 2$, as $G$ does not contain any $2$-cycles. There is an automorphism fixing $(v_0,\ldots,v_{s-1})$ pointwise, but not fixing $v_s$. Therefore, the second out-neighbour $v'_s \neq v_s$ of $v_{s-1}$ must also have $v_0$ as an out-neighbour. By directed $2$-arc-transitivity we conclude that for any vertex $v_i$ on $P$, the out-neighbours of $v_i$ are exactly the in-neighbours of $v_{i+2}$, so the digraph is a directed wreath graph and $G$ is arc-transitive, which gives the desired contradiction.

We may thus assume that there is no arc from $v_s$ to $v_0$. Colour the vertices of $P$ black and the remaining vertices white. Note that $v_0$ is the unique black vertex with no black in-neighbour. Hence $v_0$ and thus all of $P$ must be fixed by any colour-preserving automorphism. By $s$-arc-regularity, this implies that the colouring is distinguishing and $G$ has distinguishing number $2$. 
\end{proof}

\subsubsection{Non-edge-transitive case}
If $G$ is not edge-transitive, then there must be $2$ orbits on edges each of which forms a disjoint union of cycles. Denote the two subgraphs induced by the edge orbits by $G_1$ and $G_2$. By transitivity, all cycles in $G_1$ have the same length, the same is true for $G_2$.

We will inductively construct a distinguishing colouring from partial colourings of $G$. Let $\tilde c$ be a partial colouring of $G$ with domain $\tilde V \subseteq V$, that is, $\tilde c$ is a function from $\tilde V$ to some set $C$ of colours. An \emph{extension} of $\tilde c$ is a colouring $c$ of $G$ such that $c$ and $\tilde c$ coincide on $\tilde V$.

\begin{lemma}
\label{lem:cyclecolourinduction}
Let $G$ be a connected $4$-valent vertex-transitive but not edge-transitive graph and assume that all edges have type $2$. Let $G_1$ and $G_2$ be the subgraphs induced by the two edge orbits. Let $V'$ be a set of vertices of $G$ and let $C$ be a cycle in $G_1$ which is disjoint from $V'$ and contains a neighbour $v$ of some vertex in $V'$. Then there is a cycle $D$ in $G_1$ which is disjoint from $V'$ (possibly $D = C$) and a partial $2$-colouring $\tilde c$ of $G$ with domain $C \cup D$ such that
\begin{itemize}
\item $C$ and $D$ both contain either $1$ or $2$ black vertices, and
\item if $\gamma \in \Aut G$ fixes $V'$ pointwise and fixes any extension of $\tilde c$, then $\gamma$ fixes $V' \cup C \cup D$ pointwise.
\end{itemize}
\end{lemma}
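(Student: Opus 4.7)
The plan is to analyse how an automorphism fixing $V'$ pointwise can act on the $G_1$-cycle structure, and to design $\tilde c$ so that, together with the attachment of $C$ to $V'$, the colour-preservation constraint pins $C\cup D$ vertex-wise. Since $G$ has exactly two edge orbits and each vertex has type $(2,2)$, both $G_1$ and $G_2$ are $2$-regular spanning subgraphs of $G$, hence disjoint unions of cycles, and any $\gamma\in\Aut G$ preserves $G_1$ and $G_2$ setwise; in particular, $\gamma$ permutes the $G_1$-cycles. Because $C\subseteq G_1$ is disjoint from $V'$, the edge between $u\in V'$ and $v\in C$ must be a $G_2$-edge. Letting $v'$ denote the other $G_2$-neighbour of $u$, any $\gamma$ fixing $V'$ pointwise satisfies $\gamma(v)\in\{v,v'\}$.

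I would then case-split according to the location of $v'$. If $v'\in V'$, or $v'\in C$, or the $G_1$-cycle $C'$ containing $v'$ meets $V'$, I would take $D=C$. In each of these subcases $\gamma(v)=v$ can be forced: if $v'\in V'$, this follows because $V\setminus V'$ is $\gamma$-invariant and $v$ lies in it; if $v'\in C$, by colouring $v$ black and $v'$ white so that the two elements of $\{v,v'\}$ receive different colours; and if $C'\cap V'\neq\emptyset$, by noting that $\gamma(C)=C'$ would send a vertex of $C'\cap V'$ backward into $C$, contradicting $C\cap V'=\emptyset$. I would then additionally colour a $C$-neighbour $w\neq v'$ of $v$ black; such a $w$ exists because $v$ has two $C$-neighbours. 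Once $\gamma(v)=v$ is forced, $\gamma(w)$ is the unique black $C$-neighbour of $v$, hence equal to $w$, and since any automorphism of a cycle fixing two adjacent vertices is trivial, $\gamma|_C$ is the identity.

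In the remaining case $v'\notin V'\cup C$ and $C'\cap V'=\emptyset$, I would set $D=C'$ and colour $C$ and $C'$ with distinct ``patterns''. The length $|C|=|C'|$ is forced by vertex-transitivity, so I cannot distinguish $C$ from $C'$ by length; instead, I would make the patterns differ in a cycle invariant, such as the number of black vertices or the pairwise distance between them (for example, $\{v,w\}$ adjacent on $C$ and $\{v'\}$ alone on $C'$, or two blacks on $C'$ at a distance other than $1$). Because $\gamma$ restricted to a $G_1$-cycle is an isometry of that cycle, distinct patterns prevent the swap $\gamma(C)=C'$, and the argument from the previous case then applies cycle-by-cycle to give $\gamma|_{C\cup D}=\mathrm{id}$. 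The main obstacle I expect is the edge case of short cycles $|C|=|C'|\in\{3,4\}$: with only $1$ or $2$ black vertices per cycle there are very few orbit-invariants available, so one must carry out a small case analysis by common cycle length and, if necessary, use further $G_2$-adjacencies between $C'$ and $V'$ to simultaneously break the swap symmetry between $C$ and $C'$ and the internal dihedral symmetry on each of them.
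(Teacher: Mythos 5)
Your strategy is essentially the paper's: the key observation that any $\gamma$ fixing $V'$ pointwise confines $\gamma(v)$ to the two $G_2$-neighbours $\{v,v'\}$ of the attaching vertex $u\in V'$, followed by a case split on where $v'$ lives. Your three subcases with $D=C$ are fine. The gap is in the remaining case, where $v'$ lies on a $G_1$-cycle $C'$ disjoint from $V'\cup C$ and you set $D=C'$. Your proposed pattern --- two adjacent blacks $v,w$ on $C$ and the single black $v'$ on $C'$ --- does force $\gamma(v)=v$ (via the black-count invariant) and hence $\gamma(v')=v'$ by injectivity, but it does \emph{not} fix $C'$ pointwise: both $C'$-neighbours of $v'$ are white, so a colour-preserving $\gamma$ could still act as the reflection of $C'$ through $v'$. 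Such an automorphism cannot be dismissed a priori: since all edges have type $2$, the stabiliser of $v'$ contains elements swapping its two $G_1$-neighbours, and nothing in your colouring excludes one that additionally fixes $V'\cup C$ pointwise. Your fallback, ``two blacks on $C'$ at a distance other than $1$,'' is under-specified for the same reason: if the two black vertices sit symmetrically about $v'$ (e.g.\ its two $C'$-neighbours, at distance $2$ from each other), the reflection through $v'$ again survives. Because the lemma is applied inductively in Theorem~\ref{thm:type2edges} with $V'$ replaced by $V'\cup C\cup D$, a surviving reflection of $D$ breaks the induction, so this is a genuine gap rather than a cosmetic omission.

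The repair is small and is exactly what the paper does: on $D=C'$ colour a single \emph{neighbour} of $v'$ on $C'$ black and leave $v'$ white. The cycles $C$ (two blacks) and $D$ (one black) still cannot be swapped; $\gamma(v)=v$ is now forced directly because $v$ is black while $v'$ is white; $\gamma(v')=v'$ follows by injectivity; and the two $C'$-neighbours of $v'$ now carry different colours, so both are fixed and $C'$ is fixed pointwise. This colouring works uniformly for every cycle length, so the separate analysis of short cycles that you anticipate as the ``main obstacle'' is unnecessary --- the real issue is not the length of the cycles but the placement of the black vertex on $D$ relative to $v'$.
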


\begin{proof}
Call a vertex $u$ a twin of $v$ if there is an automorphism in the stabiliser of $V'$ that moves $u$ to $v$. Note that $v$ has at most one twin, since there is an edge in $G_2$ connecting $v$ to some $w$ in $V'$, and $w$ has only one other neighbour in $G_2$.

If $v$ has no twin then every automorphism that fixes $V'$ pointwise must fix $v$. Set $D=C$, colour $v$ and one of its neighbours on $C$ black and colour the remaining vertices of $C$ white. Then every automorphism which fixes $V'$ as well as an  extension of this colouring must fix $v$ and its black neighbour and thus also fixes $C$.

Next assume that $v$ has a twin that lies on $C$. Again let $D = C$ and colour $v$ and one of its neighbours in $C$ black, but make sure that the black neighbour of $v$ is not a twin of $v$. The same argument as above tells us that this colouring has the desired properties.

Finally assume that $v$ has a twin $u$ that lies outside of $C$. Let $D$ be the cycle in $G_1$ containing $u$ and observe that $D$ is also disjoint from $V'$. Colour $v$ and one of its neighbours in $C$ black, colour one of the neighbours of $u$ in $D$ black, and colour the remaining vertices of $C \cup D$ white. Any automorphism that fixes $V'$ as well as an extension of this colouring must fix $u$ and $v$ and their respective black neighbours, whence we have found the desired colouring.
\end{proof}

\begin{theorem}
\label{thm:type2edges}
Let $G$ be a connected $4$-valent vertex-transitive but not edge-transitive graph and assume that all edges have type $2$. Then $D(G) = 2$.
\end{theorem}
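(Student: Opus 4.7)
The plan is to construct a distinguishing $2$-colouring of $G$ inductively, using Lemma~\ref{lem:cyclecolourinduction} as the main tool to enlarge a set $V'\subseteq V(G)$ that is forced to be pointwise fixed by every colour-preserving automorphism. Let $\ell_1$ and $\ell_2$ denote the common lengths of the cycles in $G_1$ and $G_2$ respectively (well defined by vertex-transitivity), and without loss of generality assume $\ell_1\ge\ell_2$.

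First, I would verify that the inductive step goes through whenever $V'$ is a proper union of complete $G_1$-cycles. Contracting each $G_1$-cycle of $G$ to a single vertex yields a connected multigraph whose edges arise from the $G_2$-edges of $G$; hence, as long as $V'\ne V(G)$, there is a $G_1$-cycle $C$ disjoint from $V'$ that contains a $G$-neighbour of some vertex of $V'$. The connecting edge must lie in $G_2$, since a $G_1$-edge from $V'$ would have both endpoints in the same $G_1$-cycle, already contained in $V'$. Applying Lemma~\ref{lem:cyclecolourinduction} adds $C$ (and possibly one further $G_1$-cycle $D$) to $V'$ while preserving the pointwise-fixed property. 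Since each step adds at least one new $G_1$-cycle and there are only finitely many, the process terminates with $V'=V(G)$, and the cumulative partial colouring, extended by white on any remaining vertices, is a distinguishing $2$-colouring.

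To get the induction started, suppose $\ell_1\ge 6$. Fix any $G_1$-cycle $C_0$, colour its vertices by a distinguishing $2$-colouring of the abstract cycle $C_{\ell_1}$ (which exists by Lemma~\ref{degreebound}, since $C_{\ell_1}$ is not one of $C_5$, $K_n$, or $K_{n,n}$), and colour every other vertex of $G$ white. Any colour-preserving automorphism $\gamma$ preserves the two edge orbits $E(G_1)$ and $E(G_2)$ setwise, hence permutes the $G_1$-cycles; since all black vertices lie on $C_0$ and the $G_1$-cycles partition $V(G)$, this forces $\gamma(C_0)=C_0$. The restriction of $\gamma$ to $C_0$ is therefore an automorphism of $C_{\ell_1}$ preserving the distinguishing colouring, hence the identity, so $V':=C_0$ is pointwise fixed and the induction above produces the desired $2$-colouring.

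The main obstacle is the remaining case $\ell_1\le 5$, which by our normalisation also forces $\ell_2\le 5$. Here one cannot pointwise fix a single $G_1$-cycle with a $2$-colouring, because $D(C_{\ell_1})=3$ for $\ell_1\in\{3,4,5\}$. I would address this by initialising $V'$ as a union of two or three adjacent $G_1$-cycles, coloured so that the numbers of black vertices distinguish the cycles setwise while the $G_2$-edges between them provide enough rigidity to break the residual dihedral symmetry within each cycle; in the worst case this reduces to a short case check over the finitely many pairs $(\ell_1,\ell_2)\in\{3,4,5\}^2$. Carrying out this small-cycle initialisation is the delicate step, but once an initial pointwise-fixed $V'$ has been exhibited the induction based on Lemma~\ref{lem:cyclecolourinduction} finishes the argument uniformly.
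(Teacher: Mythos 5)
Your inductive framework is essentially the paper's, and for $\ell_1\ge 6$ your argument works, modulo one small repair: once the induction has run, the other $G_1$-cycles are no longer all white, so the literal claim that ``all black vertices lie on $C_0$'' fails for the final colouring; but Lemma~\ref{lem:cyclecolourinduction} guarantees every other cycle carries at most $2$ black vertices, while any distinguishing $2$-colouring of $C_{\ell_1}$ must use at least $3$ (a black set of size $\le 2$ is preserved by a reflection), so $\gamma(C_0)=C_0$ still follows. The genuine gap is the case $\ell_1\le 5$, which you defer to an unexecuted ``short case check over the finitely many pairs $(\ell_1,\ell_2)$''. This is not a finite check: for each such pair there are infinitely many graphs $G$, and you would need to exhibit, uniformly over all of them, a $2$-colouring of two or three adjacent $G_1$-cycles that is pointwise-fixing. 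It is not clear that such a local initialisation always exists, and this is exactly where the content of the theorem lies.

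The paper closes this case with two ideas absent from your proposal. For $\ell_1\ge 4$ it initialises with a \emph{single vertex} $v_1$ (which suffices for Lemma~\ref{lem:cyclecolourinduction}, since the relevant vertex $v$ has at most one twin over the stabiliser of $V'$), runs the induction over all cycles except the one containing $v_1$, and only afterwards colours $v_1$ and its two $G_1$-neighbours black, making $C_1$ the unique cycle with three black vertices; the single failure mode (the two $G_1$-neighbours of $v_1$ sharing their $G_2$-neighbourhood) is then excluded structurally, since for $\ell_1\ge 5$ it would force $G_2$-degree at least $3$, and for $\ell_1=4$ it would force $G$ to be a wreath graph, hence edge-transitive. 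For $\ell_1=\ell_2=3$ no colouring of small vertex sets is attempted at all: $G$ is recognised as the line graph of a cubic graph $H$, and $D(G)=D'(H)\le 2$ follows from Theorem~\ref{monikabound}, with the exceptions $L(K_4)\cong W_3$ and $L(K_{3,3})\cong K_3\square K_3$ ruled out by edge-transitivity. Without arguments of this kind your proof establishes the theorem only when the longer cycle class has length at least $6$.
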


\begin{proof}
 Let $G_1$ and $G_2$ be the subgraphs induced by the two edge orbits respectively and without loss of generality assume that cycles in $G_1$ are at least as long as cycles in $G_2$.

If $G_1$ consists of a single cycle then this cycle must have length at least $6$. Hence there is a distinguishing $2$-colouring of $G_1$ which must also be distinguishing $2$-colouring of $G$. Hence we may assume that $G_1$ consists of more than one cycle.

If cycles in $G_1$ have length at least $4$, then let  $C_1$ be a cycle in $G_1$ and let $v_1$ be a vertex on this cycle. Now inductively apply Lemma~\ref{lem:cyclecolourinduction}. For the first step, let $V' = \{v_1\}$. In each step, pick a cycle $C \neq C_1$ which contains a $G_2$-neighbour of $V'$, colour it according to the lemma and add the vertices of $C \cup D$ to $V'$. The graph obtained from $G$ by contracting every cycle in $G_1$ is connected and vertex-transitive. Hence, by Lemma \ref{lem:connectivity} it is $2$-connected and remains connected after removing $C_1$. In particular, the above colouring procedure assigns colours to all vertices except those in $C_1$. Finally colour $v_1$ and its neighbours on $C_1$ black, and colour the rest of $C_1$ white.

We claim that the resulting colouring is distinguishing. Clearly, every colour-preserving automorphism must fix $v_1$ since it is the only black vertex both of whose neighbours in $G_1$ are black (recall that $C_1$ is the only cycle in $G_1$ containing $3$ black vertices). Using Lemma~\ref{lem:cyclecolourinduction} inductively, we see that every colour-preserving automorphism must  fix every cycle pointwise, except possibly $C_1$. Hence the colouring is distinguishing unless the two neighbours of $v_1$ in $G_1$ have the same $G_2$-neighbourhood. In this case, by vertex-transitivity any two vertices at distance $2$ in $G_1$ have the same $G_2$-neighbourhood. If cycles in $G_1$ have length $5$ or more, this implies that vertices have degree at least $3$ in $G_2$  which is a contradiction. If cycles in $G_1$ have length $4$, then so do cycles in $G_2$ and $G$ is a graph obtained by identifying antipodal points of $4$-cycles, i.e., a wreath graph, which contradicts the assumption that $G$ is not edge transitive.

It remains to deal with the case when both $G_1$ and $G_2$ are disjoint unions of $3$-cycles.
Let $H$ be the graph with vertices these $3$-cycles, with two such $3$-cycles being adjacent in $H$ if they share a vertex in $G$. It is easy to see that $H$ is regular of valency 3 and $G=L(H)$. By Theorem~\ref{monikabound}, we have $D(G)=D'(H)\leq 2$, unless $H$ is $K_4$ or $K_{3,3}$. Finally, note that $L(K_4)\cong W_3$ while $L(K_{3,3})\cong K_3 \square K_3$.

\end{proof}

\subsection{Arc-transitive graphs}
\label{sec:arctransitive}

We first prove a few lemmas to show that we can restrict ourselves to graphs with girth~$4$.

\begin{lemma}\label{lem:girth3}
Let $G$ be a connected $4$-valent arc-transitive graph. If $G$ has girth $3$, then $G$ is either $K_5$ or $W_3$, or the line graph of a $3$-valent arc-transitive graph.
\end{lemma}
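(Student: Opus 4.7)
My plan is to parametrise by $\lambda := |N(u) \cap N(v)|$ for an edge $uv$, which by edge-transitivity (implied by arc-transitivity) is constant and equals the number of triangles through each edge. Since $G$ has girth $3$ with $|N(u)|=4$ and $v \notin N(u)\cap N(v)$, we have $\lambda \in \{1,2,3\}$, and I would split the proof into these three cases.

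For $\lambda = 3$: starting from any edge $uv$ with common neighbours $N(u)\cap N(v) = \{x_1,x_2,x_3\}$, so $N(u)=\{v,x_1,x_2,x_3\}$ and $N(v)=\{u,x_1,x_2,x_3\}$, applying the condition $|N(u) \cap N(x_i)| = 3$ to each edge $ux_i$ forces all pairs $x_ix_j$ to be edges, so $\{u,v,x_1,x_2,x_3\}$ spans a $K_5$; by connectivity and $4$-regularity this is all of $G$. For $\lambda = 2$: write $N(u) = \{v, w_1, w_2, z\}$ with $w_1, w_2 \in N(u) \cap N(v)$, and examine the two common neighbours of $u$ along $uw_1$. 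In the subcase $w_1 \sim w_2$, the set $\{u,v,w_1,w_2\}$ spans a $K_4$ and any common neighbour of $u$ and its remaining neighbour $z$ would have to be one of $v,w_1,w_2$; but each such choice would create an extra triangle on an edge of the $K_4$, contradicting $\lambda=2$. In the other subcase, $w_1 \sim z$ (and by symmetry $w_2 \sim z$) is forced, and iterating the same analysis around $v$ and $z$ closes the graph on exactly six vertices whose non-edges form a perfect matching; this is the octahedron, so $G = W_3$.

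For $\lambda = 1$: each edge lies in a unique triangle, so the triangles partition $E(G)$, and counting shows each vertex lies in exactly two of them. Moreover, two distinct triangles share at most one vertex, since a shared edge would lie in two triangles, contradicting $\lambda=1$. This is precisely a Krausz partition of $G$ into simple triangle-cliques with each vertex in at most (in fact exactly) two cliques, so by Krausz's theorem $G = L(H)$, where $H$ has the triangles of $G$ as vertices and the vertices of $G$ as edges joining the two triangles to which each belongs. Each triangle contains three vertices, so $H$ is $3$-valent, and $H$ is connected because $G$ is. Arc-transitivity of $H$ follows from the fact that $\Aut G$ acts transitively on edges of $H$ (via vertex-transitivity of $G$) and that the stabiliser of a vertex $u\in V(G)$, acting transitively on $N(u)$ while preserving its canonical partition into two triangle-pairs, must contain a block-swapping element; such an element fixes $u$ and swaps the two triangles at $u$, yielding transitivity on the two arcs of $H$ through $u$.

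The main obstacle will be the detailed local bookkeeping in the $\lambda = 2$ analysis, where I must track the interactions of several vertices to rule out the $K_4$ subcase and to verify that in the remaining subcase the adjacencies really do close up on exactly six vertices.
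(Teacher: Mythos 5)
Your proof is correct, but it takes a genuinely different route from the paper: the paper disposes of this lemma in one line by citing an external classification result (\cite[Theorem~5.1(1)]{Girth4}), whereas you give a self-contained combinatorial argument based on the edge-transitivity-invariant $\lambda=|N(u)\cap N(v)|$. Your case analysis checks out: for $\lambda=3$ the closure to $K_5$ is immediate; for $\lambda=2$ the $K_4$ subcase is indeed killed because any placement of the two required common neighbours of $u$ and $z$ inside $\{v,w_1,w_2\}$ forces some edge of the $K_4$ into a third triangle, and in the non-$K_4$ subcase the three edges $vw_1$, $vw_2$, $zw_1$ each force a common fourth neighbour, closing up the octahedron $W_3$ on six vertices; for $\lambda=1$ the triangles form a Krausz partition with every vertex in exactly two triangles and distinct triangles meeting in at most one vertex (so $H$ is a simple $3$-valent graph with $G=L(H)$), and your observation that the vertex-stabiliser acts transitively on $N(u)$ while preserving the partition into two triangle-pairs, hence contains a block-swapping element, correctly yields arc-transitivity of $H$. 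What your approach buys is a short elementary proof requiring no outside reference and making the three outcomes transparent; what the paper's citation buys is brevity and consistency, since the same reference \cite{Girth4} is reused for the girth-$4$ lemmas (Lemmas~\ref{lem:girth4manyneighbours} and~\ref{2ATGirth4}), where the analogous hand analysis would be considerably longer.
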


\begin{proof}
Follows from ~\cite[Theorem~5.1(1)]{Girth4}).
\end{proof}

\begin{lemma}
\label{lem:girth-46cage}
Let $G$ be a connected graph of minimal valency at least $3$ and girth $g \geq 5$. If $G$ is $s$-arc-transitive, then $s \leq g-3$, unless $G$ is a Moore graph of girth $5$, or the incidence graph of a projective plane.
\end{lemma}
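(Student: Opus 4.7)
The plan is to suppose, for a contradiction, that $s \geq g - 2$ and derive that $\mathrm{diam}(G) = \lfloor g/2 \rfloor$; distance-regularity then follows, and classical classification results take care of the rest. Note first that $s \geq 3$ (since $g \geq 5$), so $G$ is vertex-transitive and hence $k$-regular with $k \geq 3$.

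To bound the diameter, I would fix a $g$-cycle $v_0 v_1 \cdots v_{g-1} v_0$ and observe that the initial segment $(v_0, v_1, \ldots, v_{g-2})$ is a $(g-2)$-arc whose endpoint is at distance $2$ from $v_0$ (via the edge $v_0 v_{g-1}$). By $(g-2)$-arc-transitivity, which follows from $s \geq g-2$, every $(g-2)$-arc starting at $v_0$ ends at a distance-$2$ vertex. For $x$ with $d(v_0, x) = D := \mathrm{diam}(G)$, take a geodesic $(v_0, y_1, \ldots, y_D = x)$: if $D \geq g - 2$, its first $g-2$ edges already form a $(g-2)$-arc ending at graph distance $g-2 > 2$, a contradiction; otherwise, extend the geodesic to a $(g-2)$-arc by appending $g - 2 - D$ non-backtracking steps (possible since $k \geq 3$) to reach a new endpoint $z$ with $d(v_0, z) = 2$ and $d(z, x) \leq g - 2 - D$, and apply the triangle inequality to obtain $D \leq g/2$. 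The matching lower bound $D \geq \lfloor g/2 \rfloor$ comes from antipodal vertices on the $g$-cycle: a shorter graph-path between them combined with the longer cycle arc would yield a cycle of length less than $g$.

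Next, since $s \geq g - 2 \geq D$, any two geodesics of equal length can be extended to $s$-arcs and mapped to one another by $s$-arc-transitivity, so $G$ is distance-transitive and hence distance-regular. I would then invoke a standard structural result: a distance-regular graph of girth $g$ and diameter $\lfloor g/2 \rfloor$ is a Moore graph when $g$ is odd, and the incidence graph of a generalized $(g/2)$-gon when $g$ is even.

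To finish, I would appeal to two classical classification theorems. The Bannai--Ito--Damerell theorem rules out Moore graphs of odd girth $\geq 7$ with valency $\geq 3$, so in the odd case $g = 5$ and $G$ is a Moore graph of girth $5$, as desired. For even $g$, the Feit--Higman theorem restricts $g/2 \in \{2, 3, 4, 6, 8\}$, and Weiss's bound $s \leq g/2 + 1$ for $s$-arc-transitive generalized $(g/2)$-gons, combined with $s \geq g - 2$, forces $g/2 \leq 3$; since $g \geq 5$, this gives $g = 6$ and $G$ is the incidence graph of a projective plane. The hardest part will be stitching together the elementary diameter argument with these deep classification results, including the (standard but non-trivial) identification of distance-regular graphs with our specific parameters as generalized polygons (and in particular verifying bipartiteness) in the even case.
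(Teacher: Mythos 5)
Your strategy (bound the diameter, upgrade to distance-transitivity, then classify) is genuinely different from the paper's, which never leaves elementary territory: it observes that $(g-2)$-arc-transitivity forces the endpoints of \emph{every} $(g-2)$-arc to have a common neighbour, splices two such arcs sharing the subpath $(v_0,\dots,v_{g-3})$ into a closed walk of length $6$, concludes $g\le 6$ at once, and then treats $g=5$ and $g=6$ directly. Your diameter computation is correct, the passage to distance-transitivity via extending geodesics to $s$-arcs is fine (using $D\le g/2\le s$), and the odd case does land on Moore graphs of girth $5$. But note that the bound $s\le g/2+1$ you quote at the very end is the classical Tutte--Biggs bound valid for \emph{all} connected graphs of minimum valency $3$ (not just generalized polygons, and not due to Weiss); combined with $s\ge g-2$ it gives $g\le 6$ immediately, which would let you dispense with Bannai--Ito--Damerell and Feit--Higman entirely.

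The genuine gap is the ``standard structural result'' you invoke in the even case: it is \emph{false} that a distance-regular graph of girth $g$ and diameter $g/2$ must be the incidence graph of a generalized $(g/2)$-gon. The odd graph $O_4$, i.e.\ the Kneser graph $K(7,3)$, is a $4$-valent distance-transitive graph of girth $6$ and diameter $3$ that contains $7$-cycles, so it is not bipartite and not the incidence graph of a projective plane. (It is only $3$-arc-transitive, so it does not contradict the lemma, but it satisfies every hypothesis you feed into that cited step; the Clebsch graph plays the same role for girth $4$.) Bipartiteness --- equivalently $a_{g/2}=0$ --- is not forced by the intersection numbers that girth and diameter determine, so you must return to the $(g-2)$-arc-transitivity to obtain it. The paper's repair is exactly this: if $G$ were non-bipartite there would be an edge between two vertices at distance $3$ from some vertex $v$, yielding a $4$-arc from $v$ to a vertex $x$ with $d(v,x)=3$, whence $v$ and $x$ would have a common neighbour, a contradiction. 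With that argument inserted (plus the observation that girth $6$ makes common neighbours of distance-$2$ pairs unique), your even case closes and the proof is complete.
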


\begin{proof}
Assume for a contradiction that $G$ is $(g-2)$-arc-transitive. Let $C = (v_0, \dots, v_{g-1})$ be a cycle of length $g$. Note that $(v_0,\dots,v_{g-2})$ is a $(g-2)$-arc and that its endpoints have a common neighbour. By $(g-2)$-arc-transitivity, every $(g-2)$-arc has this property.

Let $v_{g-2}'$ be a neighbour of $v_{g-3}$ outside of $C$. Then $(v_0, \dots , v_{g-3}, v_{g-2}')$ is a $(g-2)$-arc, whence $v_{g-2}'$ and $v_0$ have a common neighbour $v_{g-1}'$. Now the closed walk $(v_0,v_{g-1},v_{g-2},v_{g-3},v_{g-2}',v_{g-1}',v_0)$ shows that $g \leq 6$.

If $g=5$, then the fact that the endpoints of every $3$-arc have a common neighbour implies that $G$ has diameter $2$ and is thus a Moore graph.

If $g=6$, then an analogous argument as above yields that $G$ has diameter $3$. If $G$ was not bipartite, then for $v\in V$ there would be an edge connecting two vertices $x$ and $y$ at the same distance from $v$, and since $g=6$ we have $d(x,v) = d(y,v) = 3$. But then there is a $4$-arc from $v$ to $x$ whence by the above argument $v$ and $x$ have a common neighbour, contradicting $d(x,v)=3$. 

Hence $G$ is bipartite and every vertex at distance $2$ from a given vertex $v$ has a unique common neighbour with $v$. It follows that $G$ is the incidence graph of a projective plane.
\end{proof}

\begin{lemma}
\label{lem:girth5d2}
Let $G$ be a connected $4$-valent arc-transitive graph of girth at least $5$, then $D(G) = 2$.
\end{lemma}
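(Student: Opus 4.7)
The plan is to first dispense with the two exceptional families flagged by Lemma~\ref{lem:girth-46cage}, and then to build a distinguishing $2$-colouring of $G$ anchored on a short cycle whose complement is highly connected.

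For the exceptional cases: by the Hoffman--Singleton theorem, a Moore graph of girth $5$ has valency in $\{2,3,7,57\}$, so there is no $4$-valent Moore graph of girth $5$. An incidence graph of a projective plane of order $q$ has valency $q+1$, so $4$-valency forces $q=3$, making $G$ the (unique) incidence graph of the projective plane of order $3$, which is the $(4,6)$-cage; its distinguishing number is $2$ by Lemma~\ref{lem:specialgraphs}\ref{itm:46cage}. We may therefore assume, for the rest of the argument, that $G$ is at most $(g-3)$-arc-transitive, where $g\geq 5$ is its girth.

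To build the colouring, first invoke Lemma~\ref{lem:cyclicconnectivity} to select a $g$-cycle $C=(v_0,\ldots,v_{g-1})$ in $G$ such that $H:=G-C$ is $2$-edge connected. Note that, since $g$ is the girth, $C$ is an induced subgraph of $G$, so the restriction of any colour-preserving automorphism to $C$ lies in $\Aut(C_g) = D_g$. I would then colour most vertices of $C$ black in an asymmetric pattern (for instance, colour $V(C)\setminus\{v_1\}$ black) and add a single black marker $w\in V(H)$ adjacent to $v_0$; all remaining vertices are white. The asymmetric pattern on $C$ together with the unique black marker $w$ should be chosen so that any colour-preserving automorphism $\phi$ is forced to fix $v_0$ (as the unique endpoint of the black arc on $C$ adjacent to the unique non-$C$ black vertex) and then, since $v_0$ is fixed and the colouring on $C$ is not symmetric about $v_0$, to fix the rest of $C$ pointwise. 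Once $C$ is fixed pointwise, I propagate the fixing into $H$: each $v_i$ has exactly two neighbours $\{x_i,y_i\}\subseteq V(H)$, and the bound $s\leq g-3$ combined with the $2$-edge connectedness of $H$ should forbid any nontrivial swap of the pairs $\{x_i,y_i\}$ from being compatible with an automorphism, via an inductive walk argument along a spanning structure of $H$.

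The main obstacle is the propagation step and, in particular, verifying that a marker $w$ with the needed adjacency behaviour can always be chosen. If every vertex of $H$ adjacent to $C$ happens to have two distinct $C$-neighbours, the simple marker argument fails and one instead has to break the dihedral symmetry using the induced bipartite structure between $C$ and $H$, for example by picking two markers whose pair of $C$-neighbours is not invariant under any nontrivial element of $D_g$ stabilising the colouring on $C$. The small-girth case $g=5$ (where Lemma~\ref{lem:girth-46cage} only gives $s\leq 2$ and $C_5$ itself has distinguishing number $3$) will be the most delicate, and may require separate analysis according to the possible local groups $\mathbb{Z}_4$, $V_4$, $D_4$, $A_4$, $S_4$ that can occur at a vertex of a $4$-valent arc-transitive graph of girth $5$.
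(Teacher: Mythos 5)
Your preliminary reductions are the same as the paper's: Hoffman--Singleton rules out a $4$-valent Moore graph, the projective-plane case is exactly the $(4,6)$-cage handled by Lemma~\ref{lem:specialgraphs}\ref{itm:46cage}, so Lemma~\ref{lem:girth-46cage} gives $s\leq g-3$; and the anchor cycle $C$ with $G-C$ being $2$-edge connected comes from Lemma~\ref{lem:cyclicconnectivity} just as in the paper. (Two of your worries are actually non-issues: for $g\geq 5$ a vertex of $H=G-C$ has at most one neighbour on $C$, since two such neighbours would create a cycle shorter than $g$; and the fact that $C_5$ alone needs three colours is irrelevant because the symmetry of $C$ is broken by vertices outside $C$, not by a $2$-colouring of $C$ itself.)

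The genuine gap is the ``propagation step,'' which is not a technicality but the entire content of the proof. Fixing $C$ pointwise does \emph{not} force an automorphism to be trivial: each $v_i$ has two neighbours $x_i,y_i$ in $H$, and the pointwise stabiliser of $C$ may a priori swap them; neither girth $\geq 5$ nor $2$-edge-connectedness of $H$ forbids this, and for graphs with large vertex stabilisers (which is exactly the regime where $s$-arc-transitivity bites) this stabiliser is typically nontrivial. Since your colouring leaves all of $H$ white except a single marker $w$, any nontrivial element of the pointwise stabiliser of $C\cup\{w\}$ preserves it, so the colouring is not distinguishing in general. What is needed --- and what the paper supplies --- is a colouring of $H$ itself: order the vertices of $H$ by distance from $v_{g-1}$ in $H$, colour a vertex white if it is already fixed by the pointwise stabiliser of everything coloured before it and black otherwise, and then use $s\leq g-3$ to show that each black vertex of $H$ acquires at most one black later-coloured neighbour, so that $v_1$ is recognisable as the unique black vertex with three black neighbours and the black vertices of $H$ can be pinned down one at a time. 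One also needs an additional vertex $v_0'$, chosen in a different orbit from $v_0$ under the pointwise stabiliser of an $s$-arc on $C$, to force the initial black tree to be fixed pointwise rather than merely setwise. None of this machinery appears in your sketch, and without it the argument does not close.
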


\begin{proof}
Let $g$ be the girth of $G$ and let $s$ be such that $G$ is $s$-arc-transitive but not $(s+1)$-arc-transitive. Note that there is no $4$-valent Moore graph, and that there is a unique $4$-valent graph that is the incidence graph of a projective plane, namely the $(4,6)$-cage. By Lemmas~\ref{lem:specialgraphs} and \ref{lem:girth-46cage} we may thus assume that $s \leq g - 3$.

By Lemma \ref{lem:cyclicconnectivity}, there is a cycle $C = (v_0, \dots, v_{g-1})$ such that $G-C$ is $2$-edge connected. Let $P=(v_{s+1},v_{s},\ldots,v_1)$ and let $X$ be its pointwise stabiliser. Note that $P$ is an $s$-arc and thus $X$ is not transitive on $N(v_1)\setminus \{v_2\}$ (otherwise $G$ would be $(s+1)$-arc-transitive). Let $v_0'$ be a neighbour of $v_1$ that is in a different orbit than $v_0$ under $X$.

Note that the subgraph induced by the vertices $\{v_0',v_0,v_1,\dots, v_{g-2}\}$ is a tree since any additional edge between these vertices would give a cycle of length less than $g$. Denote this tree by $T$ and let $H$ be the subgraph obtained from $G$ by removing all vertices of $T$. Observe that $v_0'$ has degree at most $3$ in $G-C$. If $H$ is not connected, then there is one component of $H$ that is connected to $v_0'$ by a unique edge. Removing that edge from $G-C$ would disconnect it, contradicting the fact that $G-C$ is $2$-edge connected. It follows that $H$ is connected.

Colour all vertices of $T$ black and colour $v_{g-1}$ white. Inductively colour the vertices of $G$ as follows: Let $x$ be a vertex at minimal distance to $v_{g-1}$ in $H$ that has not been coloured yet. If $x$ is fixed by the pointwise stabiliser in $A$ of all previously coloured points, then colour it white. Otherwise colour it black.

We claim that this colouring is distinguishing. First note that if an automorphism fixes two neighbours $u$ and $w$ of a vertex $v$, then it must also fix $v$, since otherwise the image of $v$ would also be a common neighbour of $u$ and $w$ contradicting $g \geq 5$. Note that this implies that all vertices in $H$ with a neighbour outside of $H$ are coloured white. Indeed, at the time such a vertex $x$ is considered for colouring, two of its neighbours are already coloured: its predecessor on a shortest $v_{g-1}$-$x$-path in $H$ and its neighbour outside of $H$. Hence by the previous observation, $x$ is coloured white. 

Next we show that $v_1$ is the only black vertex with three black neighbours. By the above observations it is the only such vertex in $T$. Now let $x$ be a black vertex in $H$. Then at most one neighbour of $x$ was coloured before $x$ (otherwise we would have coloured $x$ white). Furthermore, if $P$ is a shortest $v_{g-1}$-$x$-path in $H$, then $P \cup C$ contains an $s$-arc ending in $x$. Hence the pointwise stabiliser of $x$ and all vertices coloured before $x$ does not act transitively on the remaining neighbours of $x$, whence at most one of them will be coloured black.

Let $\gamma$ be a colour preserving automorphism. The above discussion shows that $\gamma$ must fix $v_1$. Furthermore all neighbours of $T$ are white, so $\gamma$ must preserve $T$ setwise. Since there is no automorphism of $G$ that fixes $(v_1,\dots,v_{g-2})$ and moves $v_0$ to $v_0'$, $\gamma$ must fix $T$ pointwise. Finally assume that there is a vertex in $H$ that is not fixed by $\gamma$ and let $x$ be the first such vertex that was coloured in the inductive procedure. Clearly, $x$ is coloured black. Let $y$ be the neighbour of $x$ on a shortest $v_{g-1}$-$x$-path $P$, and let $S$ be an $s$-arc contained in $C \cup P$. Then $S$ is pointwise stabilised by $\gamma$, and since the orbit of $x$ under the pointwise stabiliser of $S$ is not a singleton, it contains exactly one other element $x'$. Every automorphism that fixes $x$ and $S$ also fixes $x'$ and vice versa. Hence at most one of $x$ and $x'$ can be coloured black and thus neither of them can be moved by $\gamma$.
\end{proof}
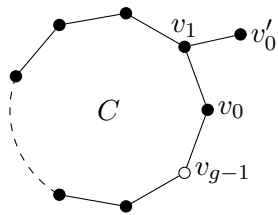
\begin{figure}
\centering
\begin{tikzpicture}
	\pgfdeclarelayer{E}
	\pgfdeclarelayer{V}
	\pgfsetlayers{E,main,V}
	\tikzmath{\i=1.3; \o=2; \n=9; \s=360/\n;}
    
    \begin{pgfonlayer}{V}
	\foreach \a in {1,...,5,7,8}
	{
		\node[inner sep=1.5pt,circle,draw,fill] (v\a) at ({\a*\s-40}:\i){};
    }
    
	\node[inner sep=1.5pt,circle,draw,fill=white] (v9) at (-40:\i){};
    \node[inner sep=1.5pt,circle,draw,fill] (u2) at (30:\o){};
    \end{pgfonlayer}
    
    \begin{pgfonlayer}{E}
	\foreach \a in {-2,...,4}
    {
        \path[draw] ({\a*\s-40}:\i)--({\s+\a*\s-40}:\i);
	}
    \path[draw] (u2)--(v2);
    \draw[dashed] ({5*\s-40}:\i) arc ({5*\s-40}:{7*\s-40}:\i);
    \end{pgfonlayer}
    \node at (0,0) {$C$};
    \node[right] at (v1) {$v_0$};
    \node[above] at (v2) {$v_1$};
    \node[right] at (v9) {$v_{g-1}$};
    \node[right] at (u2) {$v_0'$};
\end{tikzpicture}
\caption{The tree $T$ in the proof of Lemma \ref{lem:girth5d2}.}
\label{fig:girth5d2}
\end{figure}

Next we give some results for the case when $G$ has girth exactly $4$. Note that in this case, there must be vertices at distance $2$ from each other with $2$ or more common neighbours. The following two lemmas follow from results in~\cite{Girth4}.

\begin{lemma} \label{lem:girth4manyneighbours}
Let $G$ be a connected $4$-valent arc-transitive graph. If there are two vertices at distance $2$ with $3$ or more common neighbours, then $G$ is isomorphic to either $K_5\times K_2$ or $W_n$ for some $n \geq 4$.
\end{lemma}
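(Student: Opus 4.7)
The strategy is to split into subcases depending on whether some pair of vertices at distance $2$ has exactly $3$ or exactly $4$ common neighbours, and to exploit the strong symmetry of arc-transitivity in each case.

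First, suppose there exist distinct vertices $u, w$ with $N(u) = N(w)$, i.e.\ they share $4$ common neighbours. Define an equivalence relation $\sim$ on $V(G)$ by $v \sim v'$ if and only if $N(v) = N(v')$, and call the classes \emph{twin classes}. By vertex-transitivity, all twin classes have a common size $k$. Each such class is an independent set of size $k$ whose members share a common neighbourhood of size $4$, so $k \leq 4$. Moreover $G$ is isomorphic to the lexicographic product $H[kK_1]$, where $H$ is the quotient graph modulo $\sim$; since $G$ has valency $4$, $H$ has valency $4/k$, forcing $k \in \{2, 4\}$. If $k = 2$, then $H$ is connected and $2$-regular, hence $H \cong C_n$ for some $n$; the girth-$4$ constraint then gives $n \geq 4$ and $G \cong W_n$. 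If $k = 4$, then $H \cong K_2$ and $G \cong K_{4,4} \cong W_4$.

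Otherwise, some pair $u, w$ at distance $2$ has exactly $3$ common neighbours $\{x_1, x_2, x_3\}$, and no pair has $4$. Writing $N(u) = \{x_1, x_2, x_3, u'\}$ and $N(w) = \{x_1, x_2, x_3, w'\}$, the goal is to show $G \cong K_5 \times K_2$ (the bipartite double cover of $K_5$, i.e.\ $K_{5,5}$ with a perfect matching removed). Using arc-transitivity together with the absence of twins, one argues that $G$ is bipartite of diameter $3$ and that each vertex $v$ has a unique \emph{antipode} $\bar v$ at distance $3$ with $N(v) \cap N(\bar v) = \emptyset$ (in the $K_5 \times K_2$ model, $u=(1,0)$ pairs with $w'=(1,1)$). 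Identifying antipodes and propagating the local structure via arc-transitivity then forces the quotient to be $K_5$, and therefore $G \cong K_5 \times K_2$.

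The main obstacle is the second case, which requires delicate local and global analysis to rule out spurious candidates with compatible local parameters; this is where the classification machinery of \cite{Girth4} becomes essential, supplying the precise structural results needed to identify $K_5 \times K_2$ among $4$-valent arc-transitive graphs of girth $4$.
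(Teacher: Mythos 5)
Your proposal follows the same overall route as the paper: both split on whether some pair of vertices at distance $2$ shares $4$ or exactly $3$ common neighbours, and both ultimately lean on the classification results of \cite{Girth4}. The difference is in the first case. The paper simply cites \cite[Lemma~4.3]{Girth4} to conclude that $G$ is a wreath graph, whereas you give a short self-contained argument: two degree-$4$ vertices with $4$ common neighbours are twins, the twin classes have common size $k\geq 2$ dividing $4$, and $G\cong H[kK_1]$ with $H$ connected and $(4/k)$-regular, forcing $G\cong W_n$ or $K_{4,4}$. This is a correct and genuinely more elementary replacement for that citation. For the second case your text is only a sketch --- the claims that $G$ is bipartite of diameter $3$ and that each vertex has a unique antipode are asserted, not proved --- but you explicitly defer the real work to \cite{Girth4}, which is exactly what the paper does (it invokes Subcase~II.A of the proof of \cite[Theorem~3.3]{Girth4}), so the two proofs are on the same footing there.

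One small point: in your $k=2$ subcase the quotient is a cycle $C_n$ with $n\geq 3$, and $n=3$ yields the octahedron $W_3$, which does satisfy the hypotheses as literally stated. You exclude it by invoking ``the girth-$4$ constraint'', which is not a hypothesis of the lemma; it is only implicit from the surrounding context (the lemma is applied after girth $3$ has been handled by Lemma~\ref{lem:girth3}). This is a wrinkle in the statement itself rather than a flaw in your argument, but without a girth hypothesis the conclusion should read $W_n$ for $n\geq 3$.
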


\begin{proof}
If there are vertices with $4$ common neighbours, then by \cite[Lemma 4.3]{Girth4}, $G$ is a wreath graph. Otherwise, Subcase II.A of the proof  \cite[Theorem 3.3]{Girth4} implies that $G\cong K_5\times K_2$.
\end{proof}

\begin{lemma}\label{2ATGirth4}
Let $G$ be a connected  $4$-valent $2$-arc-transitive graph. If $G$ has girth $4$ but no two vertices at distance $2$ have more than $2$ common neighbours, then $G$ is isomorphic to either $Q_4$, or the bipartite complement of the Heawood graph.
\end{lemma}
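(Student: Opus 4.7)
The approach is to pin down $G$ by a local analysis: first show that $G$ has a highly rigid second-neighbourhood structure, then rule out all but two possibilities by considering the third neighbourhood, ultimately invoking the classification in \cite{Girth4}.

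I would begin by observing that $2$-arc-transitivity forces the number $\mu$ of common neighbours of two vertices at distance $2$ to be a well-defined constant: indeed, $\Aut G$ acts transitively on $2$-arcs, and hence on unordered pairs of vertices at distance $2$. The girth-$4$ hypothesis yields a $4$-cycle, giving some such pair with at least two common neighbours, and the stated hypothesis then forces $\mu = 2$.

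Next, I would examine the structure of $N(v) \cup N_2(v)$ for a fixed vertex $v$. Writing $N(v) = \{v_1, v_2, v_3, v_4\}$, each $2$-subset $\{i, j\}$ gives a unique common neighbour $p_{ij}$ of $v_i$ and $v_j$ other than $v$. A direct counting argument shows that any coincidence $p_{ij} = p_{kl}$ with $\{i,j\} \neq \{k,l\}$ forces $v$ and $p_{ij}$ to share at least three common neighbours, a contradiction; hence the six $p_{ij}$'s are distinct. Moreover, $2$-arc-transitivity forces every vertex of $N_2(v)$ to have exactly two neighbours in $N(v)$, so $N_2(v) = \{p_{ij} : 1 \leq i < j \leq 4\}$ has size exactly $6$, and the action of the vertex stabiliser on $N_2(v)$ is the natural action on $2$-subsets of $\{1,2,3,4\}$.

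The main step is then to analyse the two remaining neighbours of each $p_{ij}$. By the induced transitive action, the only possible edges inside $N_2(v)$ correspond either to all disjoint pairs of $2$-subsets or to all intersecting pairs; the latter is immediately ruled out because it would push the valency of $p_{ij}$ above $4$, and the former (non-bipartite) case would impose further structural constraints on $N_3(v)$ incompatible with the parameters. Once $G$ is established to be bipartite, a count of $|N_3(v)|$ (which must equal $3$ or $4$) yields the bipartite complement of the Heawood graph (on $14$ vertices) or $Q_4$ (on $16$ vertices) respectively. The principal obstacle is verifying that the non-bipartite configuration cannot arise and that each bipartite configuration is realised by exactly the claimed graph; this is where I would invoke the detailed analysis of girth-$4$ cases in \cite{Girth4}.
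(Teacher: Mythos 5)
Your proposal ends in the same place as the paper's proof: the paper's entire argument consists of the observation that, by $2$-arc-transitivity and girth $4$, every edge lies in at least three $4$-cycles (equivalently, every $2$-arc lies in a $4$-cycle, so every pair at distance $2$ has exactly $2$ common neighbours), followed by a citation of Subcase II.B of the proof of Theorem 3.3 in the cited girth-$4$ classification. Since you also invoke that reference for the final identification of the two graphs, the two routes coincide in substance; your additional local analysis (constancy of $\mu$, the six distinct vertices $p_{ij}$, the exclusion of the ``intersecting pairs'' configuration by a valency count, and $|N_3(v)|\in\{3,4\}$) is correct as far as it goes and re-derives part of what that subcase establishes.

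The one genuine gap is your treatment of the remaining $A_v$-invariant configuration inside $N_2(v)$, namely the perfect matching $\{p_{12}p_{34},\ p_{13}p_{24},\ p_{14}p_{23}\}$, which you dismiss by asserting that it ``would impose further structural constraints on $N_3(v)$ incompatible with the parameters.'' As written this is not an argument, and it is the only step you neither prove nor delegate to the reference. It can be repaired cheaply, without looking at $N_3(v)$ at all: girth $4$ plus $2$-arc-transitivity forces \emph{every} $2$-arc to lie in a $4$-cycle, but in the matching configuration the $2$-arc $(v_1,p_{12},p_{34})$ has $p_{12}$ as the \emph{unique} common neighbour of its endpoints (the other neighbours of $v_1$ are $v$, $p_{13}$, $p_{14}$, none adjacent to $p_{34}$, since the only edges inside $N_2(v)$ pair each $p_{ij}$ with its complement), a contradiction. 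Be aware also that in the edgeless-$N_2(v)$ case, bipartiteness and the uniqueness of the graph realising each of the parameter sets $|N_3(v)|=3$ and $|N_3(v)|=4$ still require an argument (for instance, excluding edges inside $N_3(v)$ in the $16$-vertex case); deferring this to the cited classification is legitimate---it is exactly what the paper does---but your write-up presents the local analysis as doing more of the work than it actually does.
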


\begin{proof}
By $2$-arc-transitivity, every edge is contained in at least three $4$-cycles. Subcase II.B of the proof of \cite[Theorem 3.3]{Girth4} then implies that $G$ is isomorphic to one of the two graphs as claimed.
\end{proof}

The hardest case to deal with is when the graph is locally $D_4$. In this case,  we take advantage of the following structural property. Note that $D_4$ in its natural action on $4$ points admits a unique system of imprimitivity with $2$ blocks of size $2$. We say that a $2$-arc $(v_0,v_1,v_2)$ is \emph{straight}, if $\{v_0,v_2\}$ is a block with respect to the local group at $v_1$, and  \emph{crooked} otherwise. 
Note that, of the three $2$-arcs starting with a given arc, one is straight and two are crooked. Further note that fixing a crooked $2$-arc fixes all neighbours of its midpoint. Finally, note that $A$ acts transitively on crooked $2$-arcs of $G$. Call a cycle in $G$ \emph{straight}, if all sub-arcs of length $2$ are straight.

\begin{theorem}
\label{thm:arctransitive}
Let $G$ be a connected $4$-valent arc-transitive graph, then $D(G) = 2$ unless $G$ is $K_5$, $K_3\square K_3$, $K_5 \times K_2$, or $W_n$ for some $n \geq 3$.
\end{theorem}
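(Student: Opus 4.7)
The argument is a case analysis on the girth $g$ of $G$. If $g=3$, Lemma~\ref{lem:girth3} yields $G\in\{K_5,W_3\}$ or $G\cong L(H)$ for some connected $3$-valent arc-transitive $H$; the first two are exceptions in the theorem. For the line-graph alternative, either $H$ is non-exceptional and Lemma~\ref{lem:specialgraphs}\ref{itm:linegraph-not-exceptional} gives $D(G)\le 2$, or Theorem~\ref{theo:cubic} gives $H\in\{K_4,K_{3,3},Q_3,\text{Petersen}\}$, and one checks that $L(K_4)\cong W_3$ and $L(K_{3,3})\cong K_3\square K_3$ (both exceptions), while $L(Q_3)$ and $L(\text{Petersen})$ satisfy $D\le 2$ by Lemma~\ref{lem:specialgraphs}\ref{itm:linegraph-pet-q3-wn}. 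The case $g\ge 5$ is exactly Lemma~\ref{lem:girth5d2}, so I may assume $g=4$.

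\emph{Girth-$4$ subcases handled by previous lemmas.} If some pair of vertices at distance $2$ has at least $3$ common neighbours, Lemma~\ref{lem:girth4manyneighbours} gives $G\cong K_5\times K_2$ or $G\cong W_n$ for some $n\ge 4$, all exceptions. Otherwise every such pair has at most $2$ common neighbours; if in addition $G$ is $2$-arc-transitive, Lemma~\ref{2ATGirth4} forces $G$ to be $Q_4$ or the bipartite complement of the Heawood graph, both of which have $D=2$ by Lemma~\ref{lem:specialgraphs}\ref{itm:4hypercube},\ref{itm:bipcomphae}.

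\emph{Main case.} It remains to treat $G$ arc- but not $2$-arc-transitive, of girth $4$, with at most $2$ common neighbours at distance $2$. The local group is a transitive non-$2$-transitive subgroup of $S_4$, hence one of $\mathbb{Z}_4$, $V=\mathbb{Z}_2\times\mathbb{Z}_2$, or $D_4$; each of these preserves a partition of $N(v)$ into two pairs, so the straight/crooked dichotomy defined in the text applies. When the local group is $\mathbb{Z}_4$ or $V$ the action on $N(v)$ is regular, whence $A$ is arc-regular, and it suffices to produce a $2$-colouring under which some distinguished arc is uniquely identifiable from its colour pattern; this is elementary. In the locally $D_4$ case, I would exploit the decomposition of $E(G)$ into \emph{straight cycles} (orbits of the straight-successor relation on arcs). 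The strategy is to select one straight cycle $C$, colour a short segment of $C$ so as to force $C$ to be pointwise fixed by any colour-preserving automorphism, and then propagate fixing throughout $G$ by an induction analogous to Lemma~\ref{lem:cyclecolourinduction}, using the key fact noted in the text that once a crooked $2$-arc is pointwise fixed, all four neighbours of its midpoint are fixed.

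\emph{Main obstacle.} The bulk of the work is the locally $D_4$ subcase. The delicate points are: verifying that the straight-cycle quotient is sufficiently connected for the propagation to reach every vertex (an analogue of Lemma~\ref{lem:cyclicconnectivity} is needed); handling short straight cycles, particularly triangles and $4$-cycles; and excluding degenerate configurations in which two straight-partners at a common vertex share their entire neighbourhood---these produce extra wreath-type symmetry and must either force $G$ into the list of exceptions or be treated by ad hoc colourings, in the spirit of the endgame of Theorem~\ref{thm:type2edges}.
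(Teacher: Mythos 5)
Your reduction steps are all sound and coincide with the paper's: girth $3$ via Lemma~\ref{lem:girth3} together with Theorem~\ref{theo:cubic} and Lemma~\ref{lem:specialgraphs}, girth at least $5$ via Lemma~\ref{lem:girth5d2}, and the girth-$4$ subcases with three common neighbours or with $2$-arc-transitivity via Lemmas~\ref{lem:girth4manyneighbours} and~\ref{2ATGirth4}. The arc-regular case (local group $C_4$ or $V_4$) is also fine; the paper's colouring is exactly the elementary one you allude to (colour $v$ and three of its neighbours black, so the arc from $v$ to its white neighbour is forced). One small caveat: for $V_4$ the local action preserves three partitions of $N(v)$ into pairs, not one, so the straight/crooked dichotomy is not canonical there; this is harmless only because you dispose of that case by arc-regularity before invoking it.

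The genuine gap is the locally $D_4$ case, which is the heart of the theorem, and your proposal for it is a strategy sketch whose hard points you explicitly leave open. The paper resolves this case by a dichotomy you do not have. If some $4$-cycle is \emph{not} straight, one first proves a local rigidity claim (any automorphism fixing a vertex and all its neighbours is the identity, using that a crooked $4$-cycle lets one recover a crooked $2$-arc at each successive vertex) and then a single black path of length $3$ whose first $2$-arc is crooked and whose last is straight is distinguishing. If \emph{all} $4$-cycles are straight, the straight $4$-cycles partition $E(G)$ and pairwise meet in at most one vertex, so they form a $4$-valent, locally $D_4$ auxiliary graph $G'$ on half as many vertices with $D(G)\le D(L(G'))\le D(G')$ by Lemma~\ref{lem:linegraph}; one then inducts on the order of the graph, checking that the exceptional values of $G'$ all have $D(L(G'))=2$ by Lemma~\ref{lem:specialgraphs}\ref{itm:linegraph-pet-q3-wn} and that $G'=K_5$ cannot occur. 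Your proposed alternative---fix a segment of one straight cycle and propagate fixing through the straight-cycle decomposition as in Lemma~\ref{lem:cyclecolourinduction}---runs exactly into the obstacles you name (connectivity of the quotient, short straight cycles, wreath-like degeneracies), and you offer no argument that these can be overcome; in particular the degenerate configurations are precisely where the exceptional graphs $K_3\square K_3$, $K_5\times K_2$ and $W_n$ live, so without the induction via $G'$ (or some substitute) the case analysis does not close. As written, the proposal does not constitute a proof of the locally $D_4$ case.
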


\begin{proof}
By Lemmas \ref{lem:girth3}, \ref{lem:girth5d2}, as well as Lemma \ref{lem:specialgraphs}, we can assume that $G$ has girth $4$.  By Lemma \ref{lem:girth4manyneighbours}, we can assume that no two vertices have more than two common neighbours.

Since $G$ is arc-transitive, the local group must be a transitive subgroup of $S_4$. If the local group is $2$-transitive, then $G$ is $2$-arc-transitive and this case is handled with Lemmas \ref{2ATGirth4} and \ref{lem:specialgraphs}. 

If the local group is $C_4$ or $V_4$, then $G$ is arc-regular. One can then colour one vertex $v$ and three of its neighbours black, and colour the remaining vertices white. Any colour preserving automorphism must fix the arc from $v$ to its unique white neighbour, thus the colouring is distinguishing. 

The last remaining case is that $G$ is locally $D_4$. Suppose first that $G$ contains a $4$-cycle that is not straight. Let $(u,v,w,x)$ be a $4$-cycle of $G$ such that $(u,v,w)$ is a crooked $2$-arc. 

We claim that any automorphism fixing $u$ and all of its neighbours must be the identity. By arc-transitivity and connectedness it is enough to show that such an automorphism must fix all neighbours of $v$. Since no pair of vertices has more than two common neighbours, $u$ and $w$ are the only two common neighbours of $v$ and $x$. In particular, if an automorphism fixes $w$ and all its neighbours, then it must also fix $u$. Hence it fixes a crooked $2$-arc with midpoint $v$, and thus it fixes $v$ and all of its neighbours, thus proving our claim.

Let $y$ be the unique vertex such that $(v,w,y)$ is a straight $2$-arc, and let $P=(u,v,w,y)$. Suppose that $y$ is adjacent to $u$. Let $u'$ be the unique vertex other than $u$ such that $(u',v,w)$ is crooked. Note that there is an automorphism fixing $v$ and $w$ (and thus $y$) and mapping $u$ to $u'$, and thus $y$ is adjacent to $u'$, and $v$ and $y$ have at least $3$ common neighbours ($u$, $u'$, and $w$), contradicting an earlier hypothesis. We conclude that $y$ is not adjacent to $u$ and thus the induced subgraph on $P$ is a path of length $3$. Colour $P$ black and colour the remaining vertices white. Since $(u,v,w)$ is crooked, but $(v,w,y)$ is straight, every colour preserving automorphism fixes $P$ pointwise, and thus it fixes $v$ and all its neighbours. Hence, by the above claim, this colouring is distinguishing.

From now on, we can assume that all $4$-cycles of $G$ are straight. Let $\mathcal C$ be the set of all $4$-cycles. Note that every edge is contained in a unique straight $4$-cycle, whence $\mathcal C$ forms a partition of $E(G)$. Furthermore, any two elements of $\mathcal C$ intersect in at most one vertex, since otherwise there would be vertices with $3$ or more common neighbours.

Now consider the auxiliary graph $G'$ with vertex set $\mathcal C$ and an edge between two vertices if the $4$-cycles have a vertex in common. Note that $G'$ is a $4$-valent graph  on $|\mathcal C| = \frac{|E(G)|}4 = \frac{|V(G)|}2$ vertices.

Note that $A$ has a natural induced action on $G'$, and this is easily seen to be locally $D_4$. Furthermore any distinguishing colouring of $L(G')$ corresponds to a distinguishing colouring of $G$. By Lemma \ref{lem:linegraph} and the above observations $D(G') \geq D(L(G')) \geq D(G)$. Hence if $D(G') = 2$, then $D(G) = 2$ and we are done. By induction, we may thus assume that $G'$ is one of $K_5$, $K_3\square K_3$, $K_5 \times K_2$, or $W_n$ for some $n \geq 3$. If $G'\neq K_5$, then by Lemma~\ref{lem:specialgraphs} \ref{itm:linegraph-pet-q3-wn}, we have $D(L(G'))=2$ and we are done. Finally note that $G' = K_5$ is not possible, since $A$ induces a transitive, locally $D_4$ action on $G'$, but $K_5$ admits no such action.
\end{proof}

\noindent\textsc{Acknowledgements.}
We would like to thank the anonymous referees for a number of helpful suggestions.

\bibliographystyle{abbrv}
\bibliography{bibliography.bib}

\begin{thebibliography}{10}

\bibitem{albertsoncollins}
M.~O. Albertson and K.~L. Collins.
\newblock Symmetry breaking in graphs.
\newblock {\em Electron. J. Combin.}, 3(1):Research Paper 18, 1996.

\bibitem{hypercubedistinguishing}
B.~Bogstad and L.~J. Cowen.
\newblock The distinguishing number of the hypercube.
\newblock {\em Discrete Math.}, 283(1-3):29--35, 2004.

\bibitem{Cameron}
P.~J. Cameron, P.~M. Neumann, and J.~Saxl.
\newblock On groups with no regular orbits on the set of subsets.
\newblock {\em Arch. Math. (Basel)}, 43(4):295--296, 1984.

\bibitem{deltabound1}
K.~L. Collins and A.~N. Trenk.
\newblock The distinguishing chromatic number.
\newblock {\em Electron. J. Combin.}, 13(1):Research Paper 16, 2006.

\bibitem{diestelbook}
R.~Diestel.
\newblock {\em Graph theory}, volume 173 of {\em Graduate Texts in
  Mathematics}.
\newblock Springer, Heidelberg, fourth edition, 2010.

\bibitem{erdosrenyi}
P.~Erd\H{o}s and A.~R\'{e}nyi.
\newblock Asymmetric graphs.
\newblock {\em Acta Math. Acad. Sci. Hungar}, 14:295--315, 1963.

\bibitem{HuningEtAl}
S.~H\"{u}ning, W.~Imrich, J.~Kloas, H.~Schreiber, and T.~W. Tucker.
\newblock Distinguishing graphs of maximum valence $3$.
\newblock arXiv:1709.05797.

\bibitem{junglinegraph}
H.~A. Jung.
\newblock Zu einem {I}somorphiesatz von {H}. {W}hitney f\"ur {G}raphen.
\newblock {\em Math. Ann.}, 164:270--271, 1966.

\bibitem{deltabound2}
S.~Klav\v{z}ar, T.-L. Wong, and X.~Zhu.
\newblock Distinguishing labellings of group action on vector spaces and
  graphs.
\newblock {\em J. Algebra}, 303(2):626--641, 2006.

\bibitem{lehnersmith}
F.~Lehner and S.~M. Smith.
\newblock On symmetries of edge and vertex colourings of graphs.
\newblock 2018.
\newblock Preprint.

\bibitem{Monika}
M.~Pil\'{s}niak.
\newblock Improving upper bounds for the distinguishing index.
\newblock {\em Ars Math. Contemp.}, 13(2):259--274, 2017.

\bibitem{PotVer}
P.~Poto\v{c}nik and G.~Verret.
\newblock On the vertex-stabiliser in arc-transitive digraphs.
\newblock {\em J. Combin. Theory Ser. B}, 100(6):497--509, 2010.

\bibitem{Girth4}
P.~Poto\v{c}nik and S.~Wilson.
\newblock Tetravalent edge-transitive graphs of girth at most 4.
\newblock {\em J. Combin. Theory Ser. B}, 97(2):217--236, 2007.

\bibitem{seress}
A.~Seress.
\newblock Primitive groups with no regular orbits on the set of subsets.
\newblock {\em Bull. London Math. Soc.}, 29(6):697--704, 1997.

\bibitem{cyclicconnectivity}
B.~Wang and Z.~Zhang.
\newblock On cyclic edge-connectivity of transitive graphs.
\newblock {\em Discrete Math.}, 309(13):4555--4563, 2009.

\bibitem{connectivity}
M.~E. Watkins.
\newblock Connectivity of transitive graphs.
\newblock {\em J. Combinatorial Theory}, 8:23--29, 1970.

\end{thebibliography}
\end{document}